\newtheorem{theorem}{Theorem}[section]
\newtheorem{lemma}[theorem]{Lemma}
\newtheorem{proposition}[theorem]{Proposition}
\newtheorem{corollary}[theorem]{Corollary}
\theoremstyle{definition}
\theoremstyle{remark}
\numberwithin{equation}{section}
\newcommand{\CC}{\mathbf C}\newcommand{\NN}{\mathbf N}
\newcommand{\RR}{{\mathbf  R}}
\newcommand{\RRn}{{\RR}^n}
\newcommand{\ep}{\varepsilon}
\newcommand{\hake}[1]{\langle #1 \rangle }
\newcommand{\scalar}[2]{\langle #1 ,#2 \rangle }
\newcommand{\norm}[1]{\Vert #1 \Vert }
\newcommand{\normrum}[2]{{\norm {#1}}_{#2}}
\newcommand{\upp}[1]{^{(#1)}}
\newcommand{\p}{\partial}
\newcommand{\opn}{\operatorname}
\newcommand{\Cal}{\mathcal}
\newcommand{\ee}{\text{\rm e}}
\newcommand{\ii}{\text{\rm i}}
\newcommand{\dd }{\,\text{\rm d}}
\newcommand{\bS}{\mathbf S}
\title{Analysis of the quadratic term in the backscattering transformation}
\author[Ingrid Belti\c t\u a]{Ingrid Belti\c t\u a $^*$}
\thanks{$^*$ Partially supported by 
 the grant 2-CEx06-11-18/2006}
\address{Institute of Mathematics "Simion Stoilow"
of the Romanian Academy, PO Box 1-764, RO 014700 Bucharest, Romania}
\email{Ingrid.Beltita@imar.ro} 
\author[Anders Melin]{Anders Melin}
\address{Lund University,  Sweden}
\email{andersmelin@hotmail.com}
\begin{document}

\parskip=5pt
\baselineskip=18pt

\maketitle
\begin{abstract}
The quadratic term in the Taylor expansion at the origin of the backscattering transformation in odd dimensions $n\ge 3$ gives rise to a symmetric bilinear operator $B_2$ on 
$C_0^\infty(\RR^n)\times C_0^\infty(\RR^n)$.
In this paper we prove that $B_2$ extends to certain Sobolev spaces with weights
and show that it improves both regularity and decay.
\end{abstract}

\section{Introduction and formulation of the main result}

The quadratic part obtained when the quantum backscattering data are
expanded in a power series in the potential  gives rise to a symmetric
bilinear operator $B_2$  on $C_0^\infty (\RRn)\times C_0^\infty(\RR^n)$  when $n \geq 3$ is
odd. 
We refer to the paper \cite{M:contemp}  in which the backscattering transform
was defined in arbitrary odd dimension (see Definition 3.4 in that
paper).  
An explicit formula for $B_2 $ is provided by Corollary 10.7
of \cite{M:contemp}, which implies that
\begin{equation}\label{b2formula}
B_2(f,g)(x) = \iint E(y,z) f(x+ \frac{y+z}2) g(x-\frac{y-z}2) \dd y
\,\dd z,\
\quad f, g\in C_0^\infty (\RRn).
\end{equation}
Here $E(y,z)= 4^{-1}(i{\pi})^{1-n}\delta \upp{n-2}(|y|^2-|z|^2) $ is
the unique fundamental solution  of the ultra-hyperbolic operator
$\Delta _y-\Delta _z$ such that $E(y,z)=-E(z,y)$ and $E(y,z)$ is separately
rotation invariant in both variables (see Corollary 10.2 of
\cite{M:contemp}).

Since $E$ is not a function the  formula \eqref{b2formula} needs to be
interpreted in the distribution sense. 
If the trilinear form $Q$ on
$C_0^\infty(\RRn)$ is defined through
\begin{equation}\label{Q:def}
Q(f,g,h) = \int h(x)B_2(f,g)(x)\dd x , \quad f, g, h \in C_0^\infty (\RRn),
\end{equation}
this means that
$$
Q(f,g,h) = \scalar{E}{{\Phi}}
$$
where
$$
{\Phi}(y,z)= \int h(x) f(x+ \frac{y+z}2) g(x-\frac{y-z}2)\dd x.
$$
In what follows we are  going to use similar notation for integrals
that have to be interpreted in the distribution sense.

The expressions for $B_2$ and $E$ above show easily that $B_2$ is
continuous from $C_0^\infty (\RRn)\times C_0^\infty (\RRn) $ to
$C_0^\infty (\RRn)$. 
It commutes with translations and $|x| \leq
\sqrt{r_1^2+r_2^2} $ in the support of $B_2(f,g)$ if $|x|\leq r_1$ in
$\opn{supp}(f)$ and $|x| \leq r_2$ in $\opn{supp} (g)$. 
From  formulas we derive in the next section it will be clear  that
$B_2$ extends 
to a much larger domain than  $C_0^\infty \times C_0^\infty$. 
In particular $B_2(f,g)$ is defined as a distribution  when $f$, $g \in
L_{\opn{cpt}}^2$. 

In this paper we are going to derive  continuity estimates for $B_2$
in weighted Sobolev spaces.  
Specifically,  we consider the  spaces
$$
H_{(a,b)}(\RR ^d) = \{u \in \Cal S'(\RR ^d); \hake x ^a \hake D ^b u
\in L^2 (\RR ^d) \}
$$
where $a, b \in \RR$ and $D= \ii^{-1} \partial$, hence $\hake D$ is multiplication by $\hake {\xi}
=(1+|{\xi}|^2)^{1/2}$ on the Fourier transform side.  
We shall prove then that for certain values of $a,b \geq 0$ it is true that
$B_2$ extends to a bilinear operator on $H_{(a,b)}$.  
In fact, it
happens also that there are $a$, $b$, $\bar a$, $\bar b$ with $a<\bar a$, $b <\bar b$ such that $B_2$ is continuous from $H_{(a,b)}(\RRn) \times
H_{(a,b)}(\RRn) $ to $ H_{(\bar a, \bar b)}(\RRn)$. 
This means that
$B_2$  in a certain sense improves decay and regularity at the same
time and therefore shares some nice features with ordinary
multiplication as well as convolution. 
There are good reasons to
believe that similar properties hold for higher order terms $B_N$ in
the expansion of the backscattering transform, and if so, this would
have applications in inverse scattering.

Throughout this  paper we use the notation
$m =(n-3)/2$. 
Our main result is  the following theorem.
\begin{theorem}\label{mainthm}
Assume $(a', b',a'',b'', a, b)\in \RR^6$ satisfies 
\begin{equation}\label{exp:cond}
\begin{gathered}
0<a< m+1/2+\min (a',a''), \quad a\le a'+ a'' -1/2, \\
0\le b <1 + \min  (b',b''), \quad b+m\le b'+ b'',\\
a+b < 1/2 +  \min (a',a'')+\min (b',b'').
\end{gathered}
\end{equation}
Then $B_2$ is continuous from
$ H_{(a', b')}(\RR^n)\times  H_{(a'', b'')}(\RR^n)$ to
$H_{(a, b)}(\RR^n)$.
\end{theorem}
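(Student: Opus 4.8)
I would work entirely on the Fourier transform side. Starting from \eqref{b2formula} and the explicit value $\hat E(\sigma,\tau)=\opn{p.v.}(|\tau|^2-|\sigma|^2)^{-1}$ (the constant in $E$ is normalized precisely so that this coefficient is $1$), the computation carried out in the next section gives
$$
\widehat{B_2(f,g)}(\omega)=(2\pi)^{-n}\,\opn{p.v.}\!\int_{\RRn}\frac{\hat f(\xi)\,\hat g(\omega-\xi)}{\xi\cdot(\omega-\xi)}\dd\xi ,
$$
the principal value being the symmetric deletion of a shrinking neighbourhood of the sphere $S_\omega=\{\xi:\xi\cdot(\omega-\xi)=0\}=\{\,|\xi-\omega/2|=|\omega|/2\,\}$, which passes through $0$ and $\omega$; the only dependence on the odd dimension $n$ is through the geometry of $S_\omega$. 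Since $\norm{v}_{H_{(a,b)}}\simeq\norm{\hake\cdot^{\,b}\hat v}_{H^a(\RRn)}$ and $B_2$ is already continuous on $C_0^\infty\times C_0^\infty$, Theorem~\ref{mainthm} is equivalent to the bilinear estimate
$$
\bigl\|\hake\omega^{\,b}\,\widehat{B_2(f,g)}\bigr\|_{H^a(\RRn)}\;\lesssim\;\bigl\|\hake\xi^{\,b'}\hat f\bigr\|_{H^{a'}}\,\bigl\|\hake\eta^{\,b''}\hat g\bigr\|_{H^{a''}},\qquad f,g\in C_0^\infty(\RRn),
$$
which, once proved, extends by density.

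To prove this estimate I would run a Littlewood--Paley decomposition in $\xi$, in $\eta=\omega-\xi$ and in $\omega$, separating the high--high regime $|\xi|\sim|\eta|\gtrsim|\omega|$ from the two high--low regimes $|\xi|\sim|\omega|\gg|\eta|$ (and symmetrically, using the symmetry of $B_2$), and superimpose a dyadic decomposition in the distance to the sphere, $|\xi\cdot(\omega-\xi)|\sim 2^{-\ell}|\omega|^2$. For $\ell$ bounded the kernel is a smooth function, homogeneous of degree $-2$ in $(\xi,\eta)$; the contribution of these pieces, together with the high--high regime (where $B_2$ only gains decay and regularity and is harmless), is controlled by the Leibniz rule on the Fourier side combined with weighted bilinear estimates of Stein--Weiss / weighted-Young type, and this is what produces the hypotheses not involving $m$, namely $0\le b<1+\min(b',b'')$ and $a+b<1/2+\min(a',a'')+\min(b',b'')$.

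The heart of the matter is the high--low contribution of a thin tube $\{\,|\xi\cdot(\omega-\xi)|\sim 2^{-\ell}|\omega|^2\,\}$ with $\ell\to\infty$. Writing the principal value as a Hilbert transform in the variable $t=\xi\cdot(\omega-\xi)$ --- equivalently, a Hilbert transform in the normal direction of $S_\omega$, whose Jacobian relative to that coordinate is $\sim|\omega|$ --- one invokes the $L^2$-boundedness of directional Hilbert transforms, uniform in direction, to reduce to $L^2$-bounds for $\hat f(\xi)\hat g(\omega-\xi)$ over dyadic neighbourhoods of $S_\omega$. Since $S_\omega$ is an $(n-1)$-sphere of surface measure $\sim|\omega|^{n-1}$, combining Cauchy--Schwarz in the surface variable with the restriction inequality $H^s(\RRn)\to H^{s-1/2}(S_\omega)$ brings in the weight $|\omega|^{(n-1)/2}=|\omega|^{\,m+1}$ and the half-derivative losses. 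Balancing the resulting powers of $|\omega|$, $|\xi|$ and $2^{-\ell}$ against the weights $\hake\omega^{\,b},\hake\xi^{\,b'},\hake\eta^{\,b''}$ and the Sobolev orders $a,a',a''$, and then summing the geometric series in $\ell$ and in the Littlewood--Paley indices, is exactly what forces the remaining inequalities $0<a<m+1/2+\min(a',a'')$, $a\le a'+a''-1/2$ and $b+m\le b'+b''$ --- the non-strict ones at the critical scales, the strict ones for summability. \textbf{The main obstacle} I anticipate is precisely this near-sphere analysis: carrying out the normal-Hilbert-transform reduction uniformly as $S_\omega$ moves with $\omega$ (including the degeneration near $\omega=0$), and tracking decay and regularity simultaneously --- the fifth hypothesis $a+b<1/2+\min(a',a'')+\min(b',b'')$ is exactly the borderline case in which both are pushed and no slack is left in any of the summations.
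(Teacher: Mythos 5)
Your starting point is plausible --- passing to the Fourier side does turn $B_2$ into a bilinear multiplier with kernel $c\,\opn{p.v.}\,(\xi\cdot(\omega-\xi))^{-1}$ supported near the sphere $S_\omega=\{|\xi-\omega/2|=|\omega|/2\}$, and this is a genuinely different route from the paper, which never Fourier-transforms $E$ at all. The paper instead dualizes: writing $Q(f,g,h)=\int h\,B_2(f,g)$ and using $E(y,z)=-\int_0^\infty k_0(y,t)\dot k_0(z,t)\,\dd t$, it factors $Q$ through the wave group as $Q(f,g,h)=-4\iint A(f,g)(x,t)\,(\cos t|D|\,h)(x)\,\dd x\,\dd t$. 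The gain of $1/2$ in the first, second and fourth conditions of \eqref{exp:cond} then comes from a weighted $L^2$ bound for $u\mapsto Y_+(t)\cos(t|D|)u$ (finite propagation speed plus a dilation argument), the powers $m+1$ and the loss of $m$ derivatives come from writing $A=\partial_t^m(\,c\,S+T)$ with $S$ the spherical-means operator and estimating $S$ by a dyadic decomposition in $|x|$ together with a surface-measure lemma, and the mixed decay/regularity conditions come from a bilinear complex-interpolation theorem between the pure-weight and pure-derivative cases. So the two approaches are structurally disjoint, and yours would be worth carrying out --- but as written it is a program, not a proof.

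The gap is that every step that would actually produce the six inequalities \eqref{exp:cond} is asserted rather than performed, and at the decisive points it is not clear the plan survives. Concretely: (1) the ``high--low near-sphere'' estimate, which you yourself identify as the heart of the matter, is exactly where all of $a<m+1/2+\min(a',a'')$, $a\le a'+a''-1/2$ and $b+m\le b'+b''$ are supposed to appear, and no computation is given; claiming that the balancing ``is exactly what forces the remaining inequalities'' is circular until the powers of $|\omega|$, $2^{-\ell}$ and the dyadic frequencies are actually tracked and the borderline (non-strict) cases are checked to be summable. (2) The restriction inequality $H^{s}(\RRn)\to H^{s-1/2}(S_\omega)$ that you invoke requires $s>1/2$, but \eqref{exp:cond} permits $a'=0$ or even $a'<0$ (e.g.\ $a'=0$, $a''$ large); in that regime there is no trace of $\hat f$ on $S_\omega$ and the Cauchy--Schwarz-over-a-shell substitute must be set up and quantified separately, with a different power of $2^{-\ell}$. (3) Uniformity as $\omega\to 0$ is not addressed: the sphere $S_\omega$ collapses to a point, its surface measure $\sim|\omega|^{n-1}$ and the normal Jacobian $\sim|\omega|$ both degenerate, and the weights $\hake\omega^{\,b}$ give no help there; this low-$\omega$ region is precisely where the paper's condition $a>0$ (strict) enters, and your plan gives no mechanism producing it. (4) The identity $\hat E=\opn{p.v.}(|\tau|^2-|\sigma|^2)^{-1}$ with the stated normalization is used but not derived (one must rule out an additional term carried on the cone $|\sigma|=|\tau|$, which the antisymmetry and separate rotation invariance do, but this needs saying). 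Until (1)--(3) are carried out with explicit exponents, the proposal does not establish the theorem.
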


\begin{corollary}
Assume that 
$$
0\le \bar a \leq a, \; 0\le a, \;  0 \leq \bar b \leq b, \; \bar a + \bar b < 1/2.
$$ 
Then $B_2$ is continuous from $H_{(1/2+a,m+b)}\times H_{(1/2+a,m+b)}$ to 
$H_{(1/2+a+\bar a, m+b +\bar b)}$. 
In particular, $B_2$ is a
continuous bilinear operator on $H_{(a,b)}$ when $a\geq 1/2$ 
and $b
\geq m$.
\end{corollary}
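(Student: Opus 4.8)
The plan is to deduce the Corollary directly from Theorem~\ref{mainthm} by specializing the six exponents appearing in \eqref{exp:cond}. Concretely, I would apply the theorem with
$$
a'=a''=\tfrac12+a,\qquad b'=b''=m+b,
$$
and with target exponents $\tfrac12+a+\bar a$ in place of the theorem's $a$ and $m+b+\bar b$ in place of its $b$. The whole argument then reduces to verifying that this choice meets the inequalities in \eqref{exp:cond}; once that is checked, the asserted continuity is exactly the conclusion of the theorem, and the word "bilinear" is automatic since $B_2$ is bilinear by construction.

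For the verification I would run through \eqref{exp:cond} line by line. The positivity $0<\tfrac12+a+\bar a$ is clear from $a,\bar a\ge 0$, and $\tfrac12+a+\bar a< m+\tfrac12+\min(a',a'')=m+1+a$ reduces to $\bar a<m+\tfrac12$, which holds because $\bar a\le \bar a+\bar b<\tfrac12\le m+\tfrac12$ (recall $m=(n-3)/2\ge 0$ as $n\ge 3$). The inequality $\tfrac12+a+\bar a\le a'+a''-\tfrac12=\tfrac12+2a$ is simply $\bar a\le a$. In the middle line, $0\le m+b+\bar b$ is immediate, $m+b+\bar b<1+\min(b',b'')=m+b+1$ is $\bar b<1$ (true since $\bar b<\tfrac12$), and $(m+b+\bar b)+m\le b'+b''=2m+2b$ is again $\bar b\le b$. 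Finally $(\tfrac12+a+\bar a)+(m+b+\bar b)<\tfrac12+\min(a',a'')+\min(b',b'')=\tfrac12+(\tfrac12+a)+(m+b)$ simplifies precisely to $\bar a+\bar b<\tfrac12$. Hence all hypotheses of Theorem~\ref{mainthm} are met and $B_2$ maps $H_{(1/2+a,m+b)}\times H_{(1/2+a,m+b)}$ continuously into $H_{(1/2+a+\bar a,m+b+\bar b)}$.

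For the "in particular" statement I would apply the first part with $\bar a=\bar b=0$. Given $a\ge\tfrac12$ and $b\ge m$, write $a=\tfrac12+\alpha$ and $b=m+\beta$ with $\alpha,\beta\ge 0$; the choice $\bar a=\bar b=0$ is admissible because $0\le 0\le\alpha$, $0\le 0\le\beta$ and $0<\tfrac12$, and the first part then yields continuity of $B_2$ from $H_{(1/2+\alpha,m+\beta)}\times H_{(1/2+\alpha,m+\beta)}=H_{(a,b)}\times H_{(a,b)}$ into $H_{(1/2+\alpha,m+\beta)}=H_{(a,b)}$.

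I do not expect any genuine obstacle here: the substance is all contained in Theorem~\ref{mainthm}, and the Corollary is pure bookkeeping. The only points to watch are the systematic use of $m\ge 0$ (equivalently $n\ge 3$) when promoting $\bar a<\tfrac12$ to $\bar a<m+\tfrac12$, and keeping track of which inequalities in \eqref{exp:cond} are strict (these collapse to $\bar a+\bar b<\tfrac12$, $\bar a<m+\tfrac12$, $\bar b<1$) versus non-strict (these collapse to $\bar a\le a$, $\bar b\le b$).
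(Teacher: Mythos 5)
Your proposal is correct and is exactly the intended argument: the paper states the corollary without proof precisely because it is the direct specialization of Theorem~\ref{mainthm} with $a'=a''=1/2+a$, $b'=b''=m+b$, and your line-by-line verification of \eqref{exp:cond} (reducing to $\bar a\le a$, $\bar b\le b$, $\bar a+\bar b<1/2$, $\bar a<m+1/2$, $\bar b<1$) is complete and accurate. No gaps.
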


The proof of Theorem~\ref{mainthm} relies on a  duality argument applied to the
trilinear form $Q$ in \eqref{Q:def}.  
There is a simpler expression for $Q$. 
To see this, consider the bilinear operator
$$
A: \Cal S(\RR ^n) \times \Cal S(\RR ^n ) \to \Cal S (\RR ^{n+1})
$$
defined
through
\begin{equation}\label{def:A}
A(f,g)(x,t) =  \int \limits  k_0 (y, t) f(x-y)g(x+y) \,
\dd y , \quad x \in \RR ^n, \,  t \in \RR. 
\end{equation}
Here
$
k_0(x,t)$
is the convolution kernel of the operator $K_0(t) =\sin (t|D|)/ |D|$.

\begin{lemma}\label{lemma:Q}
We have the identity
\begin{equation}\label{1AM}
 Q(f, g, h) = -4 \iint\limits_{\RR^n\times \RR^+} 
A(f, g)(x, t) (\cos t|D| h)(x) \dd x\dd t
\end{equation}
when $f$, $g$, $h \in C_0^\infty(\RR^n)$.
\end{lemma}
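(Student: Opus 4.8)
The plan is to start from the distributional definition $Q(f,g,h) = \scalar{E}{\Phi}$ and to rewrite it by exploiting that $E$ is the fundamental solution of the ultra-hyperbolic operator $\Delta_y - \Delta_z$. First I would observe that the change of variables $y \mapsto (y+z)/2$, $z\mapsto (y-z)/2$ used in \eqref{b2formula} is, up to a linear factor, the substitution that turns $\Delta_y - \Delta_z$ into a wave-type operator in new coordinates; concretely, if one sets $u = (y+z)/2$, $v=(y-z)/2$, then the quadratic form $|y|^2 - |z|^2$ becomes $4\scalar{u}{v}$, so $E$ pushes forward to (a constant times) $\delta^{(n-2)}(\scalar{u}{v})$, the fundamental solution of $\p_u\cdot\p_v$. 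This is the place where the factor $-4$ and the sign conventions from Corollary 10.2 of \cite{M:contemp} enter, so I would track the Jacobian and the normalization of $E$ carefully here.

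Next, the key analytic input is the explicit relation between the operator $K_0(t) = \sin(t|D|)/|D|$ and the fundamental solution of the wave operator: the distribution $k_0(x,t)$ restricted to $t>0$ is, up to a constant, the forward fundamental solution $E_+$ of $\p_t^2 - \Delta_x$, and more relevantly $\p_t k_0(x,t) = (\cos t|D|)\delta(x)$. I would then unwind the definition of $A(f,g)$ in \eqref{def:A} and pair it against $\cos t|D| h$: using that $\cos t|D|$ and $\sin(t|D|)/|D|$ are convolution operators and that $K_0$ solves the initial value problem for the wave equation, the right-hand side of \eqref{1AM} becomes a pairing of $h$ with a distribution built out of $f$, $g$ and a wave fundamental solution, integrated over the forward cone $t>0$. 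The goal is to recognize this as exactly $\scalar{E}{\Phi}$ after the coordinate change above, with the extra $t$-integration accounting for the loss of one homogeneity unit (i.e.\ $\delta^{(n-2)}$ versus $\delta^{(n-1)}$, or equivalently the passage from $n$ variables to the cone in $n+1$ variables).

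Concretely I would carry out the steps in this order: (1) write $Q(f,g,h)=\scalar{E}{\Phi}$ with $\Phi(y,z)=\int h(x) f(x+\tfrac{y+z}{2}) g(x-\tfrac{y-z}{2})\dd x$; (2) substitute $u=\tfrac{y+z}{2}$, $v=\tfrac{y-z}{2}$ to get a constant multiple of $\scalar{\delta^{(n-2)}(\scalar{u}{v})}{\,\int h(x)f(x+u)g(x-v)\dd x}$; (3) introduce polar-type coordinates or a partial Fourier transform in one of the variables so that the distribution $\delta^{(n-2)}(\scalar{u}{v})$ can be expressed through the Fourier multiplier $\sin(t|D|)/|D|$ acting in that variable, which is precisely how $k_0$ enters; (4) identify the resulting expression, after renaming the auxiliary dual variable as $t$ and collapsing one integration, with $-4\iint_{\RR^n\times\RR^+} A(f,g)(x,t)(\cos t|D|h)(x)\dd x\dd t$; (5) verify that all integrals converge for $f,g,h\in C_0^\infty$ because of the compact-support propagation property for $A$ noted above (finite speed of propagation), so that all the formal manipulations with distributions are justified.

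I expect the main obstacle to be step (3)--(4): making rigorous the identification of the homogeneous distribution $\delta^{(n-2)}(\scalar{u}{v})$ (or of $E$ itself) with an integral over $t>0$ of wave-propagator kernels. This is essentially a classical formula expressing the ultra-hyperbolic fundamental solution via the wave kernel, but one must handle the distributional pairings with care — in particular the order $n-2$ of the derivative on the delta, the restriction to $t>0$ rather than all of $\RR$, and the fact that $E$ is the \emph{antisymmetric} rotation-invariant fundamental solution (which is why only $\cos t|D|$, an even function of $t$, survives in \eqref{1AM} after symmetrization). The constant $-4$ should drop out of this computation as a bookkeeping consequence of the Jacobian $2^{-n}$ from step (2), the normalization $E = 4^{-1}(i\pi)^{1-n}\delta^{(n-2)}(|y|^2-|z|^2)$, and the Fourier-inversion constants in the $\sin(t|D|)/|D|$ kernel; I would reconcile these at the very end rather than carrying the precise constant through every line.
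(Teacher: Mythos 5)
Your overall strategy is the right one --- rewrite $Q(f,g,h)=\scalar{E}{\Phi}$, express $E$ through wave propagators integrated over $t>0$, and then recognize $A(f,g)$ and $\cos(t|D|)h$ --- but there is a genuine gap exactly where you locate ``the main obstacle.'' The entire proof hinges on one precise identity, namely
$$
E(y,z)=-\int_0^\infty k_0(y,t)\,\dot k_0(z,t)\,\dd t ,
$$
which the paper simply quotes from Theorem~10.4 of \cite{M:contemp}. You never state this factorization, let alone establish it; you only gesture at ``a classical formula expressing the ultra-hyperbolic fundamental solution via the wave kernel'' and defer it. Without that identity in hand (or a citation to it), steps (3)--(4) of your plan do not go through, and the constant $-4$ cannot be ``reconciled at the very end'' because you have no explicit expression to reconcile it against. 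Once the identity is available, the rest is a three-line computation: integrate in $z$ to get $\int E(y,z)h(x-z)\dd z=-\int_0^\infty k_0(y,t)(\cos(t|D|)h)(x)\dd t$ (since $\dot k_0(\cdot,t)$ is the kernel of $\cos(t|D|)$, as you correctly note), then integrate in $y$ to produce $A(f,g)(x,t)$.

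A second, more structural problem is your step (2). The substitution $u=(y+z)/2$, $v=(y-z)/2$ turns $\Phi$ into $\int h(x)f(x+u)g(x-v)\dd x$, which \emph{decouples} the arguments of $f$ and $g$; but the definition \eqref{def:A} of $A$ requires $f$ and $g$ evaluated at $x\mp y$ with the \emph{same} integration variable $y$ paired against $k_0(y,t)$. Your change of variables therefore moves away from the structure you need to recognize, and replacing $E$ by $\delta^{(n-2)}(\scalar{u}{v})$ also destroys the separate rotation invariance in $y$ and $z$ that makes the factorization of $E$ into $k_0(y,t)\dot k_0(z,t)$ natural. The paper instead uses the homogeneity of $E$ (degree $-2(n-1)$ in $2n$ variables, whence the factor $4$ under the dilation $(y,z)\mapsto(2y,2z)$) together with the translation $x\mapsto x+z$ to arrive at
$Q(f,g,h)=4\iiint E(y,z)h(x-z)f(x+y)g(x-y)\dd x\dd y\dd z$,
which keeps $f$ and $g$ coupled through the same $y$ and isolates the $z$-shift on $h$ --- precisely the form to which the factorization of $E$ applies term by term. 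I would redo your step (2) along these lines and replace your steps (3)--(4) by an explicit appeal to the factorization identity above.
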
 

\begin{proof}
By using the homogeneity of $E$ we get
\begin{equation}\label{2AM}
Q(f,g,h)= 4 \iiint E(y,z)h(x-z)f(x+y)g(x-y)\dd x\dd y\dd z.
\end{equation}

We recall that (see Thm.~10.4 of \cite{M:contemp})
\begin{equation}\label{4AM}
E(y,z) = -\int\limits _0^\infty  k _0(y,t) \dot k_0(z,t) \dd t.
\end{equation}
Then we  first integrate with respect to $z$ in \eqref{2AM} and apply \eqref{4AM} to  write
\begin{equation}\label{5AM}
\int E(y,z)h(x-z) \dd z = - \int _0^\infty k_0(y,t) (\cos(t|D|)h)(x) \dd t.
\end{equation}
Then the integration with respect to $y$ in \eqref{2AM} gives
\begin{equation}\label{6AM}
\int k_0(y,t) f(x+y)g(x-y)\dd y =  A(f,g)(x,t).
\end{equation}
The formula \eqref{1AM} is then obtained by integrating over the  remaining variables $x$ and $t$. 
\end{proof}

The main idea is to use continuity properties of the operators 
$\cos (t|D|)$ and $A$ in $H_{(a, b)}$ spaces in  order to get the needed estimates.

Continuity properties of $\cos (t|D|)$ and $A$ are  obtained in section~3, and the 
proof of the main result is then derived.

\section{An interpolation result for bilinear operators}
In this subsection we consider general dimensions  $ d\ge 1$.

When $a,b\in \RR$ we define
\begin{equation}\label{hab:def}
H_{(a,b)}(\RR^d )=\{u \in \Cal S'(\RR^d);\, \hake x ^a \hake D ^b u\in
L^2 (\RR^d ) \}.
\end{equation}
This is a Hilbert space with norm
$$
\normrum u{(a,b)}= \norm{\hake x ^a \hake D ^bu}, 
$$
where the norm in the right-hand side is the $L^2$ norm.
Since the operators $\hake D ^b \hake x ^a \hake D ^{-b} \hake x
^{-a}$ and $\hake x ^a \hake D ^b \hake x ^{-a}\hake D ^{-b}$ are
continuous in $L^2$,  it follows that
 $$
 H_{(a,b)}(\RR^d)= \{u \in \Cal S'(\RR^d); \, \hake D ^b \hake x ^a u
 \in L^2 (\RR^d)\}
$$
and the norms $\normrum u{(a,b)}$ and $\normrum u{(a,b)}'=\norm{\hake
  D ^b \hake x ^a u}$ are equivalent.
This in turn implies that the Fourier transform is a linear
homeomorphism  from $H_{(a,b)}$ onto $H_{(b,a)}$.

Assume  $T \colon {\Cal S}(\RR^d) \times {\Cal S}(\RR^d) \to {\Cal S}(\RR^N)$
is a continuous bilinear  operator. 
Let $I(T)$ be the set of all $\sigma= (a', b', a'', b'', a, b) \in \RR^6$
for which there is a constant $C=C(\sigma)$ such that
\begin{equation}\label{interp:1}
\normrum{T(f, g)}{(a, b)} \le C \normrum{f}{(a', b')}\normrum{g}{(a'', b'')}, \quad
f, \, g \, \in \, {\Cal S}(\RR^d).
\end{equation}

The next theorem might be obtained as an application of Theorem~4.4.1 in \cite{BL}. 
For the reader's convenience we include here a direct proof.

\begin{theorem}\label{interp:thm}
The set $I(T)$ is convex in $\RR^6$.
\end{theorem}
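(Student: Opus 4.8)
The plan is to prove that $I(T)$ is convex by establishing the two-variable analogue of the Riesz--Thoren/Stein interpolation theorem for the specific family of weighted $L^2$ spaces $H_{(a,b)}$. The key observation is that, since the Fourier transform conjugates $\hake x^a$ and $\hake D^b$, the space $H_{(a,b)}(\RR^d)$ is obtained from $L^2(\RR^d)$ by applying the invertible (unbounded, but densely defined on $\Cal S$) operator $M_{(a,b)} = \hake x^{-a}\hake D^{-b}$; more precisely $u\in H_{(a,b)}$ iff $u = M_{(a,b)} v$ for some $v\in L^2$, and the norm is $\norm v$ up to the equivalence of $\normrum{\cdot}{(a,b)}$ and $\normrum{\cdot}{(a,b)}'$ noted in the text. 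Both $\hake x^{s}$ and $\hake D^{s}$ admit analytic continuations in the exponent: for $\zeta\in\CC$ the operators $\hake x^{\ii\tau}$ and $\hake D^{\ii\tau}$ are unitary on $L^2$ for real $\tau$, which is exactly what is needed to run a complex-interpolation argument.

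First I would fix two points $\sigma_0 = (a_0', b_0', a_0'', b_0'', a_0, b_0)$ and $\sigma_1 = (a_1', b_1', a_1'', b_1'', a_1, b_1)$ in $I(T)$, with constants $C_0, C_1$, and for $\theta\in[0,1]$ set $\sigma_\theta = (1-\theta)\sigma_0 + \theta\sigma_1$; the goal is $\sigma_\theta\in I(T)$. Given Schwartz functions $f,g$ and $\phi\in\Cal S(\RR^N)$, I would form, for $\zeta$ in the closed strip $S = \{0\le\opn{Re}\zeta\le 1\}$, the scalar function
\begin{equation*}
F(\zeta) = \scalar{\hake x^{A(\zeta)}\hake D^{B(\zeta)} T(f_\zeta, g_\zeta)}{\phi_\zeta},
\end{equation*}
where $A(\zeta) = (1-\zeta)a_0 + \zeta a_1$, $B(\zeta) = (1-\zeta)b_0 + \zeta b_1$, and $f_\zeta, g_\zeta, \phi_\zeta$ are obtained from normalized ($\normrum f{(a_\theta', b_\theta')}=1$, etc.) functions by inserting the corresponding analytic families of weights $\hake x^{\,\cdot\,}\hake D^{\,\cdot\,}$ with exponents interpolating linearly between the $\sigma_0$ and $\sigma_1$ data and centered so that at $\opn{Re}\zeta = \theta$ one recovers the original objects. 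One checks $F$ is analytic in the open strip and continuous and bounded on its closure (here the rapid decay of Schwartz functions and the fact that $\hake D^{z}$ maps $\Cal S$ to itself with at most polynomial-in-$z$ growth of seminorms is used). On the line $\opn{Re}\zeta = 0$ the inserted operators are, up to unitaries, exactly the $\sigma_0$-weights, so $|F(\ii\tau)|\le C_0$ by the hypothesis $\sigma_0\in I(T)$ together with Cauchy--Schwarz; likewise $|F(1+\ii\tau)|\le C_1$. By the three-lines lemma, $|F(\theta)|\le C_0^{1-\theta}C_1^{\theta}$. Taking the supremum over $\phi$ with $\normrum{\phi}{(-a_\theta,-b_\theta)}\le 1$ (the dual exponents, so that $\normrum{u}{(a_\theta,b_\theta)} = \sup_\phi |\scalar u\phi|$), and unwinding the normalizations, yields \eqref{interp:1} at $\sigma_\theta$ with $C(\sigma_\theta)\le C_0^{1-\theta}C_1^{\theta}$. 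Since convexity of a subset of $\RR^6$ is exactly the statement that the midpoint — equivalently every convex combination — of any two of its points lies in it, this completes the proof.

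The main obstacle I anticipate is the careful bookkeeping of the analytic families and the justification of analyticity/boundedness of $F$ on the strip, rather than any deep idea: one must verify that $\zeta\mapsto \hake D^{B(\zeta)}w$ is $L^2$-valued analytic for $w\in\Cal S$ (differentiate under the Fourier integral, using $\hake\xi^{B(\zeta)} = \ee^{B(\zeta)\log\hake\xi}$ and dominated convergence), that the products of such operators behave well, and that the uniform bounds on the two boundary lines really do reduce to the hypotheses because $\hake x^{\ii\tau}$ and $\hake D^{\ii\tau}$ are isometries of $L^2$. A secondary technical point is density: the estimate is first obtained for $f,g,\phi\in\Cal S$, which suffices since \eqref{interp:1} is required only on Schwartz functions in the definition of $I(T)$. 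I would also remark, as the text already does, that this is a special case of the bilinear complex interpolation theorem (Theorem~4.4.1 of \cite{BL}) applied to the compatible couples $(H_{(a_i', b_i')}, H_{(a_i'', b_i'')}, H_{(a_i, b_i)})$, once one knows $H_{(a,b)}$ arises by complex interpolation between any two such spaces with the same pattern of endpoints — but giving the self-contained three-lines argument avoids having to set up that machinery.
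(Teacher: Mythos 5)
Your overall strategy --- inserting analytic families of weights with linearly interpolated exponents, pairing with a test function, and applying the three lines theorem --- is exactly the route the paper takes. There is, however, a genuine gap in your boundary estimates. You claim that on $\opn{Re}\zeta=0$ the inserted operators are ``up to unitaries, exactly the $\sigma_0$-weights'', so that $|F(\ii\tau)|\le C_0$ uniformly in $\tau$. This is not so. To estimate, say, $\normrum{F(\ii\tau)}{(a'_0,b'_0)}$ with $F(\zeta)=\hake{x}^{-a'(\zeta)}\hake{D}^{-b'(\zeta)}f$, you must bound
$$\norm{\hake{x}^{a'_0}\hake{D}^{b'_0}\,\hake{x}^{-a'_0-\ii\tau(a'_1-a'_0)}\,\hake{D}^{-b'_0-\ii\tau(b'_1-b'_0)}f}.$$
The purely imaginary powers $\hake{x}^{\ii\tau c}$ and $\hake{D}^{\ii\tau c'}$ are indeed unitary on $L^2$, but they sit between the non-commuting weights $\hake{D}^{b'_0}$ and $\hake{x}^{\pm a'_0}$, and the conjugated operator $\hake{D}^{b}\hake{x}^{\ii\tau c}\hake{D}^{-b}$ is \emph{not} uniformly bounded in $\tau$: its operator norm grows like $(1+C|\tau|)^{|b|}$. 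The paper isolates precisely this point in Lemma~\ref{interp:lemma}, proved by an induction on the commutators of $D^{\alpha}$ with $\hake{x}^{\ii t}$ followed by a separate three-lines argument in the exponent. Your remark that one need only check that ``$\hake x^{\ii\tau}$ and $\hake D^{\ii\tau}$ are isometries of $L^2$'' is exactly the step that fails.

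As a consequence, the boundary values of $q(z)=\scalar{T(F(z),G(z))}{H(z)}$ are only bounded by a constant times a power of $1+|\opn{Im}z|$, so the plain three lines lemma does not apply to $q$ and you do not obtain the clean constant $C_0^{1-\theta}C_1^{\theta}$. The repair is standard and is what the paper does: verify that $q$ has at most polynomial growth in the closed strip (using the continuity of $T$ on $\Cal S\times\Cal S$ together with commutator estimates), divide by $(1+z)^{\gamma}$ for a suitable $\gamma$ depending on $|b'_0|,|b''_0|,|b_0|,|b'_1|,|b''_1|,|b_1|$, and apply the three lines theorem to $(1+z)^{-\gamma}q(z)$. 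With this modification your argument goes through and coincides with the paper's proof. A secondary, harmless point: your duality step $\normrum{u}{(a,b)}=\sup_{\phi}|\scalar{u}{\phi}|$ over $\normrum{\phi}{(-a,-b)}\le 1$ holds only up to the equivalence of the norms $\normrum{\cdot}{(a,b)}$ and $\normrum{\cdot}{(a,b)}'$ noted in the text, which further degrades the constant but not the conclusion.
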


We are going to use the following lemma.

\begin{lemma}\label{interp:lemma}
Assume $K \subset \RR$ is a compact set.
Then there is a positive constant $C$ depending on $K$ and $d$ only
such that 
\begin{equation}\label{interp:2}
\normrum{\hake{D}^b \hake{x}^{\ii t} \hake{D}^{-b}}{L^2(\RR^d)\to L^2(\RR^d) }
\le (1+ C|t|)^{|\opn{Re} b|},
\end{equation}
when $\opn{Re} b\in K$ and $t\in \RR$.
\end{lemma}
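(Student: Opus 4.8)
The plan is to reduce the estimate to a real exponent, establish it for one conveniently chosen even integer exponent by a Leibniz computation, and then interpolate the exponent down.

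First I would dispose of the imaginary part of $b$. Writing $b=\beta+\ii\gamma$ with $\beta=\opn{Re}b$, the operator $\hake D^{\ii\gamma}$ is a Fourier multiplier of modulus one, hence unitary on $L^2$, and it commutes with $\hake D^\beta$; since
\[
\hake D^{b}\hake x^{\ii t}\hake D^{-b}=\hake D^{\ii\gamma}\bigl(\hake D^{\beta}\hake x^{\ii t}\hake D^{-\beta}\bigr)\hake D^{-\ii\gamma},
\]
the $L^2$ operator norm is unchanged if $\gamma$ is set to zero. Taking adjoints interchanges $\beta\leftrightarrow-\beta$ and $t\leftrightarrow-t$, so it is enough to prove
\[
\normrum{\hake D^{\beta}\hake x^{\ii t}\hake D^{-\beta}}{L^2(\RR^d)\to L^2(\RR^d)}\le(1+C|t|)^{\beta},\qquad 0\le\beta\le N,
\]
where $N=N(K)$ is the least even integer with $N\ge\sup_{\beta\in K}|\beta|$ (finite since $K$ is compact); this covers all admissible values $|\opn{Re}b|$.

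Next I would settle the case $\beta=N$. Since $N$ is even, $\hake D^{N}=(1-\Delta)^{N/2}$ is a constant-coefficient differential operator, and Leibniz's rule gives $\hake D^{N}(\hake x^{\ii t}u)=\hake x^{\ii t}\hake D^{N}u+R$, where $R$ collects the terms in which at least one derivative falls on $\hake x^{\ii t}$ and hence at most $N-1$ fall on $u$. Writing $\hake x^{\ii t}=\ee^{\ii t\phi}$ with $\phi=\tfrac12\log(1+|x|^2)$, the Fa\`a di Bruno formula shows that for $|\mu|\ge1$ the function $\partial^{\mu}\hake x^{\ii t}$ is $\hake x^{\ii t}$ times a polynomial in $t$ of degree between $1$ and $|\mu|$ with bounded coefficients, whence $\norm{\partial^{\mu}\hake x^{\ii t}}_{L^\infty}\le C_{\mu,d}\,|t|(1+|t|)^{|\mu|-1}$; combined with $\norm{\partial^{\nu}u}\le\norm{\hake D^{N-1}u}\le\norm{\hake D^{N}u}$ for $|\nu|\le N-1$ this yields $\norm{R}\le C_{N,d}\,|t|(1+|t|)^{N-1}\norm{\hake D^{N}u}$, so
\[
\norm{\hake D^{N}(\hake x^{\ii t}u)}\le\bigl(1+C_{N,d}\,|t|(1+|t|)^{N-1}\bigr)\norm{\hake D^{N}u}.
\]
A term-by-term comparison of the two polynomials in $|t|$ shows $1+C_{N,d}\,|t|(1+|t|)^{N-1}\le(1+C|t|)^{N}$ once $C\ge\max(C_{N,d},1)$, a constant depending on $K$ and $d$ only; replacing $u$ by $\hake D^{-N}v$ gives the estimate for $\beta=N$.

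Finally I would interpolate. With $t$ fixed, the family $\Phi(z)=\hake D^{Nz}\hake x^{\ii t}\hake D^{-Nz}$ on the strip $0\le\opn{Re}z\le1$ is a unitary conjugate of $\hake x^{\ii t}$ on $\opn{Re}z=0$ and of $\hake D^{N}\hake x^{\ii t}\hake D^{-N}$ on $\opn{Re}z=1$, so $\norm{\Phi}\le1$ on the first line and $\norm{\Phi}\le(1+C|t|)^{N}$ on the second; moreover $\norm{\Phi(z)}$ on each vertical line is independent of $\opn{Im}z$ and finite (as $\hake x^{\ii t}$ is bounded on every $H^s$), so $z\mapsto\langle\Phi(z)f,g\rangle$ is bounded and analytic on the strip for Schwartz $f,g$. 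Hadamard's three-lines theorem then gives $\norm{\Phi(\sigma)}\le(1+C|t|)^{N\sigma}$ for real $\sigma\in[0,1]$, and taking $\sigma=\beta/N$ produces the desired bound for all $0\le\beta\le N$; together with the reductions this proves \eqref{interp:2}.

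The main obstacle is getting the precise shape of the bound rather than a crude one: it must equal $1$ at $t=0$ and carry the exact exponent $|\opn{Re}b|$ with a constant uniform in $b$. The first is handled by isolating the factor $|t|$ in every $\partial^{\mu}\hake x^{\ii t}$ with $\mu\ne0$ and then doing the elementary polynomial comparison; the second by proving the inequality only for the single fixed even integer $N(K)$ and interpolating the exponent down. The Leibniz expansion, the sup-norm bounds on derivatives of $\hake x^{\ii t}$, and the admissibility of the analytic family are all routine.
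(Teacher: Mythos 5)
Your proposal is correct and follows essentially the same route as the paper: an explicit Leibniz/commutator estimate for an even-integer power of $\hake{D}$ conjugated by $\hake{x}^{\ii t}$, followed by the three-lines theorem to interpolate the exponent down to $|\opn{Re}b|$. The only cosmetic differences are that you first strip off $\opn{Im}b$ by unitarity and treat negative $\opn{Re}b$ by taking adjoints, whereas the paper keeps the complex parameter throughout and handles the strip $-1\le\opn{Re}z\le 0$ by repeating the argument.
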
 

\begin{proof}
Choose a positive integer $M$ such that $ K\subseteq [-2M, 2M]$ and set
$$ 
P_M(z, t) = \hake{D}^{2Mz} \hake{x}^{\ii t} \hake{D}^{-2Mz}
$$
when $z\in \CC$, $t \in \RR$. 
We have that 
$$ D_j \circ \hake{x}^{\ii t} = \hake {x}^{\ii t }\circ D_j + tx_j \hake{x}^{\ii t-2}.
$$
It follows by induction over $|\alpha|$ that 
$$ 
 D^\alpha \circ \hake{x}^{\ii t} = \hake {x}^{\ii t }\circ D^\alpha +
\sum\limits_{\stackrel{1\le k \le |\alpha|}{|\beta|< |\alpha|}}
\hake{x}^{\ii t} t^k p_{k, \alpha, \beta}(x) \circ  D^\beta,
$$ 
where $\hake{x}^{|\gamma|} \partial ^\gamma p_{k, \alpha, \beta}$ is bounded  for every $\gamma\in \NN^d$. 
Hence there is a constant $C$, which depends on $M$ and $d$ only, such 
that 
$$ \normrum{P_{M}(1, t)}{L^2 \to L^2} \le (1+ C|t|)^{2M}.
$$
Since $ \hake{D}^{2M \ii \opn{Im} z}$ is unitary in $L^2$ it follows that
\begin{equation}\label{interp:3}
\normrum{P_{M}(z, t)}{L^2 \to L^2} \le (1+ C|t|)^{2M} \quad \text{when} 
\quad \opn{Re} z =1, 
\; t\in \RR.
\end{equation}
One also clearly has that 
\begin{equation}\label{interp:4}
\normrum{P_{M}(z, t)}{L^2 \to L^2} \le 1 \quad \text{when} \quad \opn{Re} z =0, 
\; t\in \RR.
\end{equation}

Let $f$, $g\in {\Cal S}(\RR^d)$ satisfy 
$\norm{f}=\norm{g}=1$ and set
$$ q_M(z, t) = (1+C|t|)^{-2zM} \scalar{P_M(z, t) f}{g}.$$
This is an entire analytic function, bounded on the strip  $ 0\le \opn{Re} z \le 1$
and, by \eqref{interp:3} and \eqref{interp:4}, $|q_M(z, t)|\le 1$ when $\opn{Re} z= 0$ or 
$\opn{Re} z= 1$.
It follows by the three lines theorem that $|q_M(z, t)|\le 1$ when $0\le \opn{Re} z\le 1$.
This implies that
$$
\normrum{P_{M}(z, t)}{L^2 \to L^2} \le (1+C|t|)^{2M |\opn{Re} z|}$$
when $0\le \opn{Re} z\le 1$, $ 
\; t\in \RR$.
A similar proof shows that the above  inequality holds also for 
$-1\le \opn{Re} z\le 0$.
The lemma follows after replacing $z$ by $b/(2M)$.
\end{proof}

\begin{proof}[Proof of Theorem~\ref{interp:thm}]
Assume 
$$\sigma_0 = (a_0', b_0', a_0'', b_0'', a_0, b_0), \quad
\sigma_1 = (a_1', b_1', a_1'', b_1'', a_1, b_1)
$$
are elements of $I(T)$.
Define
$$ \sigma(z) = (a'(z), b'(z), a''(z), b''(z), a(z), b(z))=(1-z) \sigma_0 + z\sigma_1, \quad z\in \CC.
$$
Let $f$, $g \in {\Cal S}(\RR^d_x)$ and $h \in {\Cal S}(\RR^N_y)$ and 
set
$$ 
F(z) = \hake{x}^{-a'(z)} \hake{D}^{-b'(z)} f, \quad 
G(z) = \hake{x}^{-a''(z)} \hake{D}^{-b''(z)} g
$$
and
$$ H(z) = \hake{y}^{a(z)} \hake{D}^{b(z)} h.$$
Then $F$, $G$ and $H$ are holomorphic functions of $z$ with values in ${\Cal S}(\RR^d)$ 
and ${\Cal S}(\RR^N)$, respectively, and their ${\Cal S}$ seminorms have at most polynomial growth in $|z|$ when $\opn{Re} z$ stays in a bounded set.

The previous lemma shows that when $\opn{Re} z=0$
$$
\begin{gathered} 
 \normrum{F(z)}{(a'_0, b'_0)} \le C_1 
\norm{\hake{D}^{b'_0} \hake{x}^{z(a'_0-a'_1)}\hake{D}^{-b'_0} \hake{D}^{z(b'_0- b'_1)} f}
\\
\le C_2 (1+ |\opn{Im} z|)^{|b'_0|} \norm{\hake{D}^{z(b'_0- b'_1)} f}
\le C |1+z|^{|b'_0|} \norm{f}. 
\end{gathered}
$$
Similarly one gets
$$ 
 \normrum{G(z)}{(a''_0, b''_0)} \le C |1+z|^{|b''_0|} \norm{g}, \qquad 
 \normrum{H(z)}{(-a_0, -b_0)} \le C |1+z|^{|b_0|} \norm{h}
$$
when $\opn{Re} z=0$, and 
$$
\begin{gathered}
 \normrum{F(z)}{(a'_1, b'_1)} \le C |1+z|^{|b'_1|} \norm{f}, \qquad 
\normrum{G(z)}{(a''_1, b''_1)} \le C |1+z|^{|b''_1|} \norm{g} \; \text{ and} \\
\normrum{H(z)}{(-a_1, -b_1)} \le C |1+z|^{|b_1|} \norm{h}
\end{gathered}
$$
when $\opn{Re} z=1$.

Define
$$ q(z) = \scalar{T(F(z), G(z))}{H(z)}.
$$
This is an entire analytic function.

Since $T$ is continuous  from
$\Cal S(\RR ^d) \times \Cal S ( \RR ^d) $ to $\Cal S (\RR ^N)$ it
follows (by using commutator estimates as in  the previous lemma)
that $q(z)$ is of most polynomial growth in the strip $0 \leq 
\opn{Re} z \leq 1$. 
Since $ {\sigma}_0$, ${\sigma}_1
\in I(T)$  the estimates for $F$, $G$, $H$ above show that there are
positive constants $C$ and ${\gamma}$, which are independent of
$f$, $g$, $h$, 
such that $|(1+z)^{-{\gamma}}q(z)| \leq C \norm f \cdot \norm g \cdot
\norm h$ when $\opn{Re} 
z=0$ or $\opn{Re} z=1$. 
It follows then from the three lines theorem 
that $(1+z)^{-{\gamma}}q(z)$ satisfies the same estimate for every $z$ in  the whole strip. 
When $z = {\theta} \in (0,1)$ we  get an estimate for
$q({\theta})$, and hence the   estimate
$$ \norm{\hake{D}^{b(\theta)} \hake{y}^{a(\theta)} 
T (\hake{x}^{-a'(\theta)} \hake{D}^{-b'(\theta)}f, 
\hake{x}^{-a''(\theta)} \hake{D}^{-b''(\theta)}g)}
\le C \norm{f}\, \norm{g}, $$
where $C$ is independent of $f$ and $g$. 
This means precisely that ${\sigma}({\theta}) \in I(T)$. 
\end{proof}

\section{Proof of the main result}

We recall that $n\ge 3$  is odd and we have denoted $m=(n-3)/2$.
We define the operator $K: \Cal S '(\RR ^n) \to \Cal S'(\RR ^{n+1})$
through
\begin{equation}\label{K:def}
(Ku)(x,t) = Y_+(t) \cos (t|D|)u(x),\quad t\in \RR, \, x\in \RR^n, 
\end{equation}
where $Y_+$ is the characteristic function of $[0, \infty)$.

\begin{proposition}\label{Prop:K}
Assume $a < 0$ and $b\le 0$. 
Then the operator $K$ is continuous from
$H_{(a,b)}(\RR ^n) $ to $H_{(a-1/2,b)}(\RR ^{n+1})$.
\end{proposition}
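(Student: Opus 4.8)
The plan is to work on the Fourier transform side in the spatial variable $x$, where $\cos(t|D|)$ acts as multiplication by $\cos(t|\xi|)$, and to exploit the fact that cutting off to $t\ge 0$ costs exactly half a derivative in $t$ — which, after a second Fourier transform in $t$, is the source of the exponent drop $a\mapsto a-1/2$. Concretely, I would first reduce to the case $b=0$: since $\cos(t|D|)$ commutes with $\hake D$ and $\hake D$ on $\RR^{n+1}$ is comparable to $\hake{D_x}$ on the range of frequencies that matter (more precisely one uses $\hake{(\xi,\tau)}\le C\hake\xi\hake\tau$ together with the decay in $\tau$ obtained below to absorb the $\hake\tau^b$ factor when $b\le 0$), it suffices to prove that $K$ maps $H_{(a,0)}(\RR^n)=\hake x^{-a}L^2$ continuously into $H_{(a-1/2,0)}(\RR^{n+1})$. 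Equivalently, passing to the $x$-Fourier transform, I must show that $v\mapsto Y_+(t)\cos(t|\xi|)v(\xi)$ is bounded from $H_{(0,a)}(\RR^n_\xi)$ to $H_{(0,a-1/2)}(\RR^{n+1}_{\xi,t})$, i.e. that applying $\hake{D_\xi}^{a-1/2}$ (in the $\xi$ variable, now interpreted on $\RR^{n+1}$) to $Y_+(t)\cos(t|\xi|)v(\xi)$ lands in $L^2(\RR^{n+1})$ whenever $\hake{D_\xi}^a v\in L^2(\RR^n)$.

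The key computation is to take the Fourier transform also in $t$. One has $\widehat{Y_+(\cdot)\cos(\cdot\,|\xi|)}(\tau)= \tfrac12\bigl(\widehat{Y_+}(\tau-|\xi|)+\widehat{Y_+}(\tau+|\xi|)\bigr)$, and $\widehat{Y_+}(\sigma)=\pi\delta(\sigma)+\tfrac1{i}\,\mathrm{p.v.}\tfrac1\sigma$, so the $t$-Fourier transform of $(Kv)(\xi,t)$ is, up to harmless constants, $\delta(\tau-|\xi|)v(\xi)+\delta(\tau+|\xi|)v(\xi)$ plus the principal-value terms $\frac1{\tau\mp|\xi|}v(\xi)$. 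The delta terms contribute a surface measure on the cones $\tau=\pm|\xi|$ which, when integrated in $\tau$, is exactly the situation where the coarea formula produces a factor $(1+|\xi|)^{-1/2}\asymp\hake\xi^{-1/2}$ in the $L^2$ norm — matching the drop from $a$ to $a-1/2$. The principal-value pieces are controlled by the boundedness of the Hilbert transform in $\tau$ (uniformly in $\xi$), which loses nothing, so they are in fact better behaved. Applying $\hake{D_\xi}^{a-1/2}$ throughout and using Leibniz/commutator bounds (the derivative may fall on the smooth factor $|\xi|$ rather than on $v$, but those terms only improve integrability since $a-1/2<a$ and the cutoff region is where $|\xi|\gtrsim|\tau|$), one reduces to estimating $\int_{\RR^n}\hake\xi^{-1}\bigl|\hake{D_\xi}^{a}v(\xi)\bigr|^2\,d\xi\lesssim\|v\|_{H_{(0,a)}}^2$, which is immediate.

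I expect the main obstacle to be the careful bookkeeping near $\xi=0$, where $|\xi|$ is not smooth and the two cones $\tau=\pm|\xi|$ meet: there $\cos(t|\xi|)\approx 1$ and the would-be gain $\hake\xi^{-1/2}$ is trivially $\asymp 1$, so one must check that the $a-1/2$ Sobolev norm is still controlled by the $a$ norm of $v$ near the origin — this is fine because $a-1/2<a\le$ whatever, but it requires separating a neighborhood of $\xi=0$ from the region $|\xi|\ge 1$ and treating the $\delta$-on-cone estimate only in the latter. A second, more technical point is justifying the reduction from $\hake D$ on $\RR^{n+1}$ to $\hake{D_x}$ on $\RR^n$ in the $b$-direction: here one uses that on the output the $t$-frequency $\tau$ is essentially pinned to $\pm|\xi|$ (from the cone) or tamed by the Hilbert transform, so $\hake\tau^{b}\le C$ for $b\le 0$ causes no loss, while $\hake\xi^{b}$ commutes past everything; the strict inequality $a<0$ is not used here and only enters to make the cone-measure endpoint estimate safely non-borderline. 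I would organize the write-up as: (1) reduce to $b=0$; (2) take the double Fourier transform and split into the $\delta$-cone part and the principal-value part; (3) estimate the cone part via coarea away from $\xi=0$ and trivially near $\xi=0$; (4) estimate the p.v. part via the Hilbert transform; (5) reassemble.
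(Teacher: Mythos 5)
Your plan takes a genuinely different, frequency-side route from the paper's proof, which works entirely in physical space: there one uses the strong Huygens principle (the kernel of $\cos(t|D|)$ is supported where $|x|=t$, since $n$ is odd), a dilation argument and an almost-orthogonality decomposition $f=\int f_y\,\dd y$ to obtain the uniform bound $\normrum{\Lambda_t^{-a}\cos(t|D|)\Lambda_t^{a}}{L^2\to L^2}\le C_a$ with $\Lambda_t=((1+t)^2+|x|^2)^{1/2}$, and the exponent $a-1/2$ together with the hypothesis $a<0$ then enters only through the convergence of $\int_{\RR}(1+t^2)^{a-1/2}\dd t$. A proof through the light cone on the Fourier side is conceivable, but as written yours has a real gap: you have put the $1/2$ in the wrong index. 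Since the Fourier transform maps $H_{(a-1/2,b)}(\RR^{n+1})$ onto $H_{(b,a-1/2)}(\RR^{n+1})$, the quantity to control is $\norm{\hake{(\xi,\tau)}^{b}\hake{(D_\xi,D_\tau)}^{a-1/2}\widehat{Ku}}$, i.e.\ the loss of $1/2$ is in the \emph{order} of the Sobolev norm of $\widehat{Ku}$ as a distribution in $(\xi,\tau)$ jointly; it is not a multiplicative gain $\hake{\xi}^{-1/2}$, which would instead pertain to the regularity index $b$ (half an extra $x$-derivative) and is not what the proposition asserts. Your key computation --- integrating the cone delta in $\tau$ to extract $\hake{\xi}^{-1/2}$ and reducing to $\int\hake{\xi}^{-1}\bigl|\hake{D_\xi}^{a}v\bigr|^2\dd\xi$ --- therefore aims at the wrong estimate. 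What is actually needed is a global extension (dual trace) theorem for the cone, namely $\normrum{\delta(\tau\mp|\xi|)\,v}{(0,a-1/2)}\le C\normrum{v}{(0,a)}$ on $\RR^{n+1}$ and $\RR^{n}$ respectively, valid precisely because $a-1/2<-1/2$ (an $L^2$ density carried by a hypersurface lies in $H^{s}$ only for $s<-1/2$, so $a<0$ is indispensable here, not merely a non-borderline convenience), proved uniformly over all dyadic distances from the vertex and made compatible with the weight $\hake{(\xi,\tau)}^{b}$. None of this is supplied, and it is the heart of the matter.

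The principal-value part is also not handled correctly as stated: $\opn{p.v.}(\tau-|\xi|)^{-1}v(\xi)$ is the Hilbert transform in $\tau$ of the delta term, and the delta term is not in $L^2_\tau$ for fixed $\xi$, so $L^2$-boundedness of the Hilbert transform ``uniformly in $\xi$'' gives nothing. The repair is to note that the Hilbert transform in $\tau$ is the Fourier multiplier $\sgn(t)$, which commutes with multiplication by $\hake{(x,t)}^{a-1/2}$ and by $\hake{(x,t)}^{b}$-type factors and is bounded on $L^2(\RR^{n+1})$; this reduces the p.v.\ piece to the delta piece, but again presupposes the cone estimate above. Your reduction to $b=0$ is fine in spirit (the paper does it by noting that $K$ commutes with $D_x$ and that $\hake{(D_x,D_t)}^{b}\hake{D_x}^{-b}$ is bounded for $b\le 0$). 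In summary, the two serious missing items are the correct identification of which norm carries the $1/2$ and a proof of the uniform weighted trace estimate on the characteristic cone; without them the argument does not establish the proposition.
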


\begin{proof}
When $t\geq 0$  we denote by $\Lambda _t$ the operator on $\Cal
S(\RRn)$ which is multiplication by the function
$((1+t)^2+|x|^2)^{1/2}$ and we  consider $K(t)=\cos (t|D|)$ as an
operator in $\Cal S (\RRn)$. 
Since $n$ is odd the convolution kernel
$\dot k_0(x,t)$ of $K(t)$ is supported in the set where
$|x|=t$. 
Therefore $K(t)f$ is supported in the ball with centre $x_0$
and radius $r+t$ if $f \in C_0^\infty (\RRn)$ is supported in the
ball with centre $x_0$ and radius $r$.

Assume first that $a$ is real, arbitrary.  
We prove that there exists
a constant $C_a$ such that
\begin{equation}\label{ein}
\normrum{\Lambda _t ^{-a}K(t) {\Lambda}_t ^a}{L^2(\RRn) \to L^2(\RRn)}
\leq C_a, \quad t \geq 0.
\end{equation}
Let $(T_{\sigma})_{{\sigma}>0}$  be the dilation group on $\Cal S
(\RRn)$ defined by $T_{\sigma}h(x)= {\sigma}^{n/2}h({\sigma}x)$. 
Then
$T_{\sigma} ^{-1} = T_{1/{\sigma}}$ and $T_{\sigma}$ extends to a
unitary operator in $L^2$ for every ${\sigma}$. 
We notice that
$$
T_{\sigma}K(t)T_{\sigma}^{-1}= K(t/{\sigma})
$$
and
$$
T_{1+t}{\Lambda}_t ^a T_{1+t}^{-1} = (1+t)^a {\Lambda}_0^a.
$$
It  follows that
$$
{\Lambda} _t ^{-a} K(t) {\Lambda}_t^a = T_{1+t}^{-1}\Lambda _0 ^{-a}
K(t/(1+t)) {\Lambda}_0^aT_{1+t}.
$$
Therefore, it is enough to show that for $0\le t\le 1$ the operator $\Lambda_0^{-a} K(t) \Lambda_0^a$ extends to a
bounded operator on $L^2(\RR^n)$ and that there exists 
$C_a>0$ such that
\begin{equation}\label{K:0}
\normrum{\Lambda_0^{-a} K(t) \Lambda_0^a}{L^2(\RR^n)\to L^2(\RR^n)}\le C_a, \qquad 0\le t\le 1.
\end{equation}

Take $0\le t\le 1$. 
We notice that $(K(t)f,K(t)g)=0$ if
$\opn{dist}(\opn{supp}(f), \opn{supp}(g))>2$, since the supports of
$K(t)f$ and $K(t)g$ do not overlap.
Let $0\leq \chi \in C_0^\infty(\RR ^n)$
be supported in the unit ball and satisfy $\int \chi (y)\dd y =1$. 
For
$f \in C_0^\infty(\RR^n) $ define $f_y(x) = f(x) \chi (x-y)$. 
Then
$$
(K(t)f_y ,K(t) f_z) = 0 \qquad 
\text{when}\quad  |y-z| \geq 4.$$
Since $ f = \int f_y \dd y$ it follows that
$$
\begin{gathered}
(\Lambda^{-a}_0 K(t)\Lambda_0^{a} f,\Lambda^{-a}_0 K(t) \Lambda_0^{a} f)= \iint 
(\Lambda^{-a}_0 K(t)\Lambda_0^{a} f_y,\Lambda^{-a}_0 K(t) \Lambda_0^{a} f_z) \dd y \dd z
\\
= \iint \limits _{|y-z|\leq 4} 
(\Lambda^{-a}_0 K(t) \Lambda_0^{a} f_y,\Lambda^{-a}_0 K(t)\Lambda_0^{a} f_z) \dd y \dd z
\leq C \int \norm{ \Lambda^{-a}_0 K(t)\Lambda_0^{a} f_y    }^2 \dd y.
\end{gathered}
$$ 
Since $|x-y|\le 1$ in the support of $\Lambda^a_0f_y$, we have that
$|x-y|\le t+1$ in the support of $K(t) \Lambda_0^a f_y$. 
Hence
$$ \norm{\Lambda_0^{-a} K(t) \Lambda^a_0 f_y}\le C_1 \hake{y}^{-a} \norm{\Lambda^a_0 f_y}
\le C \norm{f_y}.
$$
The proof of \eqref{K:0} is then completed by the fact that 
$$ \int \norm{f_y}^2 \dd y \le C\norm{f}^2.$$

Using \eqref{ein} we get, when $a<0$, 
$$
\begin{gathered}
\iint (1+|x|^2+t^2)^{a-1/2}|Ku(x,t)|^2 \dd x \dd t =
\iint (1+|x|^2+t^2)^{a-1/2}|(K(t) u)(x)|^2 \dd x \dd t
\\
\leq C \iint (1+|x|^2+t^2) ^{a-1/2}|u(x)|^2 \dd x \dd t\\
= C \Big (\int _{-\infty}^\infty (1+t^2) ^{a-1/2}\dd t \Big ) \int
(1+|x|^2)^a |u(x)|^2 \dd x = C' \normrum u{(a,0)}^2
\end{gathered}
$$
when $u \in {\Cal S}(\RR^n)$.
This concludes the proof for the case $b=0$, since $\Cal S(\RR^n)$ is 
dense in $H_{(a, 0)}$.

In the case $b<0$ the proposition follows from the
fact that $K$ commutes with $D_x$ and
the operator $(1+|D_t|^2+ |D_x|^2)^b\hake{D_x}^{-b}$ is bounded.  
\end{proof}

The previous proposition combined with Lemma~\ref{lemma:Q} gives the next corollary. 

\begin{corollary}\label{cor:K}
Assume  $a_1$, $a_2$, $b_1$, $b_2$, $a_3$, $b_3 \in \RR $, $a_3 >0$ and $b_3\ge 0$. 
Then
$B_2$ is continuous from $H_{(a_1,b_1)}\times H_{(a_2,b_2)}$ to
$H_{(a_3,b_3)}$ if $A$ is continuous from $H_{(a_1,b_1)}\times
H_{(a_2,b_2)}$ to $H_{(a_3+1/2,b_3)}$.
\end{corollary}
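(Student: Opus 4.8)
The plan is to combine the duality identity of Lemma~\ref{lemma:Q} with the mapping property of $K$ from Proposition~\ref{Prop:K}. Recall that $Q(f,g,h)=\int h(x)B_2(f,g)(x)\dd x$. Since $C_0^\infty(\RR^n)$ is dense in every space $H_{(a,b)}(\RR^n)$, and since the dual of $H_{(-a_3,-b_3)}(\RR^n)$ with respect to the $L^2$ pairing is $H_{(a_3,b_3)}(\RR^n)$ (this uses only that $\hake{x}^a$ and $\hake{D}^b$ are self-adjoint and the norm equivalence $\normrum{\cdot}{(a,b)}\sim\normrum{\cdot}{(a,b)}'$ recorded in Section~2), it suffices to establish an estimate
$$
|Q(f,g,h)|\le C\,\normrum{f}{(a_1,b_1)}\,\normrum{g}{(a_2,b_2)}\,\normrum{h}{(-a_3,-b_3)},
\qquad f,g,h\in C_0^\infty(\RR^n).
$$
Granting this, for fixed $f,g\in C_0^\infty(\RR^n)$ the functional $h\mapsto Q(f,g,h)$ is bounded on $H_{(-a_3,-b_3)}(\RR^n)$ and is represented, as a distribution, by $B_2(f,g)$; hence $B_2(f,g)\in H_{(a_3,b_3)}(\RR^n)$ with $\normrum{B_2(f,g)}{(a_3,b_3)}\le C\,\normrum{f}{(a_1,b_1)}\normrum{g}{(a_2,b_2)}$, and the corollary follows by density of $C_0^\infty$ in $H_{(a_1,b_1)}$ and $H_{(a_2,b_2)}$.

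To obtain this estimate I would first rewrite the right-hand side of \eqref{1AM}. By \eqref{K:def} the function $(Kh)(x,t)=Y_+(t)\cos(t|D|)h(x)$ is supported in $\{t\ge 0\}$, so the integral over $\RR^n\times\RR^+$ in \eqref{1AM} is the $L^2(\RR^{n+1})$ pairing of $A(f,g)$ with $Kh$; for $f,g,h\in C_0^\infty(\RR^n)$ both are Schwartz functions on $\RR^{n+1}$ (the first because $A$ is continuous into $\Cal S(\RR^{n+1})$), so the pairing is well defined. Applying the Cauchy--Schwarz inequality in the duality between $H_{(a_3+1/2,b_3)}(\RR^{n+1})$ and $H_{(-a_3-1/2,-b_3)}(\RR^{n+1})$ gives
$$
|Q(f,g,h)|=4\,\bigl|\scalar{A(f,g)}{Kh}_{L^2(\RR^{n+1})}\bigr|
\le C\,\normrum{A(f,g)}{(a_3+1/2,b_3)}\,\normrum{Kh}{(-a_3-1/2,-b_3)}.
$$

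Finally I would bound the two factors separately. The first is at most $C\,\normrum{f}{(a_1,b_1)}\normrum{g}{(a_2,b_2)}$, which is exactly the assumed continuity of $A$ from $H_{(a_1,b_1)}\times H_{(a_2,b_2)}$ to $H_{(a_3+1/2,b_3)}(\RR^{n+1})$. For the second, the hypotheses $a_3>0$ and $b_3\ge 0$ are precisely what is needed so that $a=-a_3<0$ and $b=-b_3\le 0$ satisfy the conditions of Proposition~\ref{Prop:K}; that proposition then yields continuity of $K$ from $H_{(-a_3,-b_3)}(\RR^n)$ to $H_{(-a_3-1/2,-b_3)}(\RR^{n+1})$, so $\normrum{Kh}{(-a_3-1/2,-b_3)}\le C\,\normrum{h}{(-a_3,-b_3)}$. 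Multiplying the three inequalities gives the displayed bound on $|Q(f,g,h)|$. There is no genuine obstacle here beyond bookkeeping: the only two points deserving a line of justification are that the $L^2(\RR^{n+1})$ pairing extends to a bounded bilinear form on $H_{(a_3+1/2,b_3)}(\RR^{n+1})\times H_{(-a_3-1/2,-b_3)}(\RR^{n+1})$, and the identification of the dual of $H_{(-a_3,-b_3)}(\RR^n)$ — both immediate from the elementary properties of the spaces $H_{(a,b)}$ collected in Section~2.
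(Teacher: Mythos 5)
Your argument is correct and is exactly the route the paper intends: the paper gives no written proof beyond the remark that the corollary follows from Proposition~\ref{Prop:K} combined with Lemma~\ref{lemma:Q}, and your write-up simply fills in the standard duality bookkeeping (pairing $A(f,g)$ against $Kh$, applying Proposition~\ref{Prop:K} with $a=-a_3<0$, $b=-b_3\le 0$, and identifying the dual of $H_{(-a_3,-b_3)}$ with $H_{(a_3,b_3)}$). Nothing is missing.
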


We turn our attention to proving continuity properties for the bilinear operator $A$.
We will first establish some useful formulas for $A(f, g)$ and its Fourier transform.

Let 
$
S\colon \Cal S(\RR ^n) \times \Cal S(\RR ^n ) \to \Cal S (\RR ^{n+1})
$
be the operator defined
through
\begin{equation}\label{def:B}
S(f,g)(x,t) =  t^{m+1} \int \limits_{\bS^{n-1}}   f(x+t \omega)g(x - t\omega) \,
\dd \omega, \quad x \in \RR ^n, \,  t \in \RR. 
\end{equation}
It is easy to see that $S$ extends to a bounded operator from 
$L^2(\RR^n)\times L^2(\RR^n)$ to $L^2(\RR^{n+1})$.

\begin{lemma}\label{lemma:fourier}
Let $\widehat A(f,g)({\xi},{\tau})$
denote the Fourier transform  of $A(f, g)$ with respect to both variables.
Then
\begin{equation}\label{fourier:1}
\widehat A(f,g)({\xi},{\tau}) =\frac {(\tau/2)^m}{2^3 i(2\pi)^{n-1}} S(\hat f, \hat g)({\xi}/2,{\tau}/2),
\end{equation}
when $f$, $g\in \Cal S(\RR^n)$.
\end{lemma}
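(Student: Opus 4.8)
The strategy is to compute $\widehat{A}(f,g)$ directly from the definition \eqref{def:A}, using the explicit Fourier transform of the kernel $k_0(\cdot,t)$ of $K_0(t)=\sin(t|D|)/|D|$. First I would recall that on the Fourier side $k_0(\cdot,t)$ is multiplication by $\sin(t|\xi|)/|\xi|$, so that $\int k_0(y,t)\,u(y)\,\dd y$ pairs $u$ against this multiplier; equivalently, for the product $f(x-y)g(x+y)$ appearing in \eqref{def:A}, I would pass to Fourier variables in $f$ and $g$ separately and carry out the $y$-integration, which produces a $\delta$-type relation between the two frequency variables. The cleanest route is: write $f(x-y)=(2\pi)^{-n}\int \hat f(\eta)e^{i\scalar{x-y}{\eta}}\dd\eta$ and similarly for $g$, substitute into \eqref{def:A}, and recognize the $y$-integral $\int k_0(y,t)e^{i\scalar{y}{\eta'-\eta}}\dd y$ as the Fourier transform of $k_0(\cdot,t)$ evaluated at $\eta-\eta'$, i.e. $\sin(t|\eta-\eta'|)/|\eta-\eta'|$ up to normalization. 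Then taking the Fourier transform in $x$ forces $\eta+\eta'=\xi$, and taking it in $t$ turns $\sin(t|\cdot|)/|\cdot|$ into a measure supported on a sphere of the appropriate radius.

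The key computation is therefore the Fourier transform in $t\in\RR$ of $t\mapsto Y$-free $\sin(t r)/r$ against $e^{-it\tau}$, which is (a constant times) the difference of point masses at $\tau=\pm r$, or after combining with the structure of the problem, a measure supported where $|\tau/2|$ equals the radius variable; this is exactly the mechanism that converts the oscillatory kernel into the spherical average defining $S$ in \eqref{def:B}. Writing $\eta=(\xi+\zeta)/2$, $\eta'=(\xi-\zeta)/2$ (so that $\eta+\eta'=\xi$ and $\eta-\eta'=\zeta$), the remaining integral in $\zeta$ over the sphere $|\zeta|=|\tau|/2$ is precisely $\int_{\bS^{n-1}}\hat f\bigl((\xi+|\tau|\omega/2)/1\bigr)\cdots$ — more care with scaling is needed, but after the substitution $\zeta=(|\tau|/2)\omega$ the Jacobian contributes the factor $(|\tau|/2)^{n-1}$, and matching this against the $t^{m+1}$ weight in \eqref{def:B} (recall $m=(n-3)/2$, so $m+1=(n-1)/2$) accounts for the power $(\tau/2)^m$ on the right-hand side of \eqref{fourier:1}. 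The constant $2^{-3}i^{-1}(2\pi)^{1-n}$ is then assembled from: the two factors of $(2\pi)^{-n}$ from the inverse Fourier transforms of $f$ and $g$, one factor of $(2\pi)^n$ from the $x$-integration producing the constraint $\eta+\eta'=\xi$, the $(2\pi)^{-1}$ or $(2\pi)$ from the $t$-transform convention, the $1/(2i)$ from writing $\sin$ as $(e^{it r}-e^{-it r})/(2i)$, and the halving of all arguments (which each contribute powers of $2$ through the change of variables).

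The main obstacle is bookkeeping rather than conceptual: keeping the Fourier normalization constants and the factors of $2$ consistent through (i) the two inverse transforms, (ii) the $y$-integration, (iii) the $x$-transform, and (iv) the $t$-transform, while simultaneously (v) handling the distributional sense in which $\sin(t|\xi|)/|\xi|$ and its $t$-Fourier transform are being manipulated — since $k_0(\cdot,t)$ is not integrable and $A(f,g)$ is a priori only a tempered distribution, all the interchanges of integration should be justified by first testing against a Schwartz function in $(x,t)$ or by noting that for $f,g\in\Cal S$ everything in sight is Schwartz in the relevant variables. A clean way to manage the sign and the power $(\tau/2)^m$ (as opposed to $(|\tau|/2)^m$) is to note that the operator $S$ in \eqref{def:B} already carries $t^{m+1}$, which is odd under $t\mapsto -t$ when $n$ is odd, and to check directly that both sides of \eqref{fourier:1} transform the same way under $\tau\mapsto-\tau$, so that the identity on $\tau>0$ extends to all $\tau$; this reconciles the absolute values appearing naturally in the sphere radius with the signed power written in the statement.
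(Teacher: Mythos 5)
Your proposal follows essentially the same route as the paper's proof: the Fourier transform in $x$ turns the $y$-integral into the multiplier $\sin(t|\eta|)/|\eta|$ acting on $\hat f(\tfrac{\xi+\eta}{2})\hat g(\tfrac{\xi-\eta}{2})$, the $t$-transform produces the difference of point masses $\delta(|\eta|-\tau)-\delta(|\eta|+\tau)$, and polar coordinates in $\eta$ yield the spherical average $S$ together with the power $\tau^{n-2}=2^{n-2}(\tau/2)^m(\tau/2)^{m+1}$, which assembles the stated constant. One small correction to your parity remark: $t^{m+1}=t^{(n-1)/2}$ is not odd for every odd $n$; the sign bookkeeping that converts $(|\tau|/2)^{\,n-2}$ into the signed power in \eqref{fourier:1} rests on the oddness of the combined exponent $n-2=2m+1$, not of $m+1$ alone.
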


\begin{proof}
Let $\phi ({\xi},t)$ be the Fourier transform of $A(f,g)(x,t)$ in
the variable $x$. 
Then
$$
\begin{gathered}
{\phi}({\xi},t)= (2\pi)^{-n} \iint k_0(y, t) \widehat f({\eta})\widehat
g({\xi}-{\eta})\ee^{-\ii \scalar{2 \eta -\xi}{y}}\dd y \dd {\eta}\\
=  (2\pi)^{-n} 2^{-n} \iint k_0(y, t) \widehat f(\frac{\xi+\eta}{2}) \widehat
g(\frac{\xi-\eta}{2})\ee^{-\ii \scalar{ \eta}{y}}\dd y \, \dd {\eta}\\
=(2\pi)^{-n} 2^{-n} \int \frac{\sin(|\eta|t)}{|\eta|} \widehat f(\frac{\xi+\eta}{2}) \widehat
g(\frac{\xi-\eta}{2}) \,\dd \eta.\\
\end{gathered}
$$
It follows that
$$
\begin{gathered}
\widehat A(f,g)({\xi},{\tau}) = 
(2\pi)^{-n} 2^{-n} \iint\limits_{\RR^n \times \RR}   
\ee^{-\ii \tau t} \frac{ \ee^{\ii t |\eta|}-\ee^{-\ii  t |\eta|}}{2\ii |\eta|}
\widehat f(\frac{\xi+\eta}{2}) \widehat
g(\frac{\xi-\eta}{2})\dd t  \,\dd {\eta}\\
=(2\pi)^{-(n-1)} \ii^{-1} 2^{-(n+1)}|{\tau}|^{-1} \int\limits_{\RR^n}   
(\delta(|\eta|-\tau)-\delta(|\eta|+\tau))
\widehat f(\frac{\xi+\eta}{2}) \widehat
g(\frac{\xi-\eta}{2})  \,\dd {\eta}\\
=(2\pi)^{-(n-1)} \ii^{-1} 2^{-(n+1)} \tau^{n-2} \int\limits_{\bS^{n-1}}   
\widehat f(\frac{\xi+\tau \omega}{2}) \widehat
g(\frac{\xi-\tau \omega }{2})  \,\dd {\omega}.
\end{gathered}
$$
This combined with \eqref{def:B} gives the lemma.
\end{proof}

\begin{lemma}\label{lemma:k0}
We have
\begin{equation}\label{k0:1}
k_0(x, t) =\partial_t^m \kappa_0(x, t),
\end{equation}
where the smooth mapping 
$ \RR \ni t\to \kappa_0(\cdot, t)\in {\Cal D}'(\RR^n)$ is given by
$$ \scalar{\kappa_0(\cdot, t)}{\varphi}=
\pi (2\pi)^{-(n+1)/2} t^{m+1} \int\limits_{\bS^{n-1}}
\varphi(t\omega)\dd \omega +  \int\limits_t^\infty  p(t/r) r^m \int\limits_{\bS^{n-1}}
 \varphi(r\omega) \dd \omega \dd r$$
for every  $\varphi \in C_0^\infty(\RR^n)$.
Here  
$$p(s) =\frac{1}{m! (4\pi)^{m+1}}
(-\frac{\dd }{\dd s})^{m+1} (1-s^2)^m.
$$
\end{lemma}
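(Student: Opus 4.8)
The plan is to start from a known closed expression for the convolution kernel $k_0(x,t)$ of $K_0(t)=\sin(t|D|)/|D|$ in odd dimension $n$, and then recognize the stated formula for $\kappa_0$ as an $m$-fold $t$-antiderivative of it. Recall that in odd dimensions the fundamental solution of the wave equation is supported on the light cone, so $k_0(\cdot,t)$ is (a constant times) a derivative of the surface measure on the sphere of radius $t$; concretely $k_0(x,t)=c_n\,\partial_t\bigl(t^{-1}\partial_t\bigr)^{m}\bigl(t\,\sigma_t\bigr)$ up to normalization, where $\sigma_t$ denotes the (normalized) surface measure on $|x|=t$. The first step is to recall or re-derive this representation; I would get it from the Fourier-analytic formula $\widehat{k_0}(\xi,t)=\sin(t|\xi|)/|\xi|$ together with the explicit inverse Fourier transform of $\sin(t|\xi|)/|\xi|$ in odd dimensions, or cite it from \cite{M:contemp}.

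Next I would introduce the candidate $\kappa_0(\cdot,t)$ as defined in the statement and compute $\partial_t^m$ of it acting on a test function $\varphi$. The first term $\pi(2\pi)^{-(n+1)/2}t^{m+1}\int_{\bS^{n-1}}\varphi(t\omega)\,\dd\omega$ is exactly $c\,t\cdot(\text{surface measure at radius }t)$ after absorbing the Jacobian $t^{n-1}=t^{2m+2}$... wait, one must be careful: $t^{m+1}\int_{\bS^{n-1}}\varphi(t\omega)\,\dd\omega = t^{m+1-(n-1)}\int_{|x|=t}\varphi\,\dd S = t^{-(m+1)}\int_{|x|=t}\varphi\,\dd S$, so it is a homogeneous-looking piece supported on the cone. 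The second term $\int_t^\infty p(t/r)r^m\int_{\bS^{n-1}}\varphi(r\omega)\,\dd\omega\,\dd r$ is a smoothing tail whose $t$-derivatives, via the chain rule $\partial_t[p(t/r)]=r^{-1}p'(t/r)$ and the boundary term from the lower limit $r=t$, will produce both cone-supported contributions and further tail integrals. The choice $p(s)=\frac{1}{m!(4\pi)^{m+1}}(-\dd/\dd s)^{m+1}(1-s^2)^m$ is engineered precisely so that: (i) $p$ and its first $m-1$ derivatives vanish at $s=1$ (since $(1-s^2)^m$ has a zero of order $m$ there, its first $m$ derivatives vanish at... let me recheck: $(1-s^2)^m$ vanishes to order $m$ at $s=1$, so $(\dd/\dd s)^j(1-s^2)^m$ vanishes at $s=1$ for $j\le m-1$), guaranteeing that the boundary terms from differentiating the tail integral $m$ times do not blow up and combine correctly; and (ii) the $(m+1)$-st derivative produces the right constant and sign to match $k_0$. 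So the core computation is: apply $\partial_t^m$ to the two-term expression, track all boundary terms carefully using the vanishing of $p^{(j)}(1)$, and check that the result equals the known kernel $k_0(x,t)$.

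The key steps in order are: (1) write down/cite the explicit form of $k_0(x,t)$ in odd dimension as a $t$-derivative of a measure on the light cone; (2) define $\kappa_0$ as in the statement and verify it is a smooth $\Cal D'(\RR^n)$-valued function of $t$ (continuity at $t=0$ and smoothness for $t>0$ are easy; the behavior as $t\to 0^+$ needs the tail integral to converge, which it does because $p$ is bounded and the integrand is $O(r^m)$ near $r=0$... actually one needs $m+(\text{something})$, but for fixed $\varphi$ compactly supported the $r$-integral is over a bounded set); (3) differentiate $m$ times in $t$, using $\partial_t p(t/r)=r^{-1}p'(t/r)$ and Leibniz, carefully collecting the boundary contributions at $r=t$; (4) use the identities $p^{(j)}(1)=0$ for $0\le j\le m-1$ and the value of $p^{(m)}(1)$ (or rather the structure of $(1-s^2)^m$ near $s=1$) to see that all but finitely many terms cancel; (5) match the surviving terms against the known $k_0$, including constants.

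The main obstacle will be step (3)–(4): the careful bookkeeping of boundary terms when differentiating the variable-endpoint integral $\int_t^\infty p(t/r)r^m F(r)\,\dd r$ (with $F(r)=\int_{\bS^{n-1}}\varphi(r\omega)\,\dd\omega$) repeatedly in $t$. Each differentiation either differentiates $p(t/r)$ inside the integral or picks up $-[\,p(t/r)r^m F(r)\,]_{r=t}$ from the lower limit, and iterating this $m$ times generates a combinatorial tree of terms; the design of $p$ via $(-\dd/\dd s)^{m+1}(1-s^2)^m$ is exactly what makes the unwanted branches vanish (because $p^{(j)}(1)=0$ for $j<m$) and what makes the first term's coefficient $\pi(2\pi)^{-(n+1)/2}$ come out right. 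I expect that writing $p(s)$ explicitly as a polynomial and integrating by parts in $r$ — converting $\int_t^\infty p(t/r)r^m F(r)\dd r$ into something where the $t$-dependence is cleaner — will be the most efficient route, but verifying the constants against \cite{M:contemp}'s normalization of $k_0$ is where the real care is needed.
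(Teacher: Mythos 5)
Your plan has a genuine gap at its core: the cancellation mechanism you rely on in steps (3)--(4) does not exist. You claim that ``$p$ and its first $m-1$ derivatives vanish at $s=1$'' because $(1-s^2)^m$ vanishes to order $m$ there. But $p$ is (a constant times) the \emph{$(m+1)$-st derivative} of $(1-s^2)^m$, so the relevant quantities are $(\dd/\dd s)^{m+1+j}(1-s^2)^m$ at $s=1$ for $0\le j\le m-1$, and these do not vanish: already $p(1)=\frac{(-1)^{m+1}}{m!(4\pi)^{m+1}}(m+1)!\,m\,2^{m-1}\ne 0$ for $m\ge 1$ (the order-$m$ zero of $(1-s^2)^m$ at $s=1$ is used up by the $m+1$ derivatives defining $p$). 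Consequently every one of the $m$ differentiations of $\int_t^\infty p(t/r)r^m\widetilde\varphi(r)\dd r$ produces a nonzero boundary term $-p^{(j)}(1)\,(\cdots)$ at $r=t$; the combinatorial tree you describe does not collapse, and those terms would have to be shown to cancel against the $t$-derivatives of the cone-supported term $\pi(2\pi)^{-(n+1)/2}t^{m+1}\widetilde\varphi(t)$ --- a computation your proposal does not carry out and whose outcome is exactly the content of the lemma.

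The paper sidesteps this entirely by running the argument in the opposite direction: it does not differentiate $\kappa_0$ at all. It cites from \cite{M:contemp} (eq.\ (5.4)) that $k_0(x,t)=\partial_t^m\,\pi(2\pi)^{-n}U_0(x,t)$ with $U_0(x,t)=\int_{\bS^{n-1}}\delta^{(m+1)}(\scalar{x}{\omega}-t)\dd\omega$, so it suffices to prove $\kappa_0=\pi(2\pi)^{-n}U_0$. Writing $\scalar{U_0(\cdot,t)}{\varphi}=c_{n-1}(-\partial_t)^{m+1}\int_t^\infty r^{n-2}(1-t^2/r^2)^m\widetilde\varphi(r)\dd r$ for $t>0$, the $m+1$ differentiations now act on $(1-t^2/r^2)^m$, which genuinely vanishes to order $m$ at $r=t$; hence only the last differentiation contributes a boundary term (producing the $t^{m+1}\widetilde\varphi(t)$ piece with the right constant) while the interior derivatives produce exactly $q(t/r)=c_{n-1}(-\dd/\dd s)^{m+1}(1-s^2)^m|_{s=t/r}$, i.e.\ the $p$-tail. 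A parity argument in $t$ (both $U_0(\cdot,t)$ and $\kappa_0(\cdot,t)$ are odd or even in $t$ according to the parity of $m$) then extends the identity from $t>0$ to all $t$ --- another point your proposal leaves unaddressed, since for $t\le 0$ the meaning of $\int_t^\infty$ requires the observed oddness of $r\mapsto p(t/r)r^m$. To repair your approach you would either have to perform the full boundary-term bookkeeping with nonvanishing $p^{(j)}(1)$ and match it against an explicit closed form of $k_0$ with constants, or, more efficiently, adopt the paper's route of computing the known $m$-fold antiderivative $U_0$ and recognizing it as $\pi^{-1}(2\pi)^n\kappa_0$.
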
  

\begin{proof}
We notice that $p(s)$ is a polynomial of degree $m-1$ which is  odd
(even) if $m $ is  even (odd). 
The polynomial $r^m p(t/r)$  is
therefore  odd in $r$ and  odd (even) in $t$ if $m$ is even (odd).  
Set
$\widetilde \varphi (t) = \int \limits_{{\bS}^{n-1}} \varphi
(t{\omega}) \dd{\omega}$ when $t \in \RR$. 
This a smooth and even
function of $t$. 
If $m$ is even then
$$
\begin{gathered}
\int\limits _t^\infty p(t/r) r^m \widetilde \varphi (r) \dd r  
=\int\limits _{-t}^\infty p(t/r) r^m \widetilde \varphi (r) \dd r
=- \int\limits _{-t}^\infty p(-t/r) r^m \widetilde \varphi (r) \dd r
\end{gathered}
$$
which shows that the left-hand side is an odd function of $t$. 
If $m$
is odd similar arguments show that the left-hand side is even in $t$. 
Hence, if  we  define ${\kappa}_0$  as in the lemma it follows that
${\kappa}_0(\cdot, t)$ is a smooth distribution valued function of $t$ which is
odd (even) if $m$ is  even (odd).

Define 
$$
U_0(x,t) = 
\int \limits _{\bS^{n-1}}\delta \upp{m+1}(\scalar x {\omega}
-t ) \dd{\omega}.
$$
It follows from equation (5.4) in \cite{M:contemp} that 
$$
k_0(x, t) = \p _t ^m {\pi}(2{\pi}) ^{-n}U_0(x,t).
$$
Here $U_0(\cdot, t)$ is a smooth distribution valued function of $t$
with the same parity as ${\kappa}_0(\cdot ,t)$. 
The lemma follows
therefore if we prove that
$$
{\kappa}_0(x, t)={\pi} (2{\pi})^{-n}U_0(x,t)
$$
when $t >0$.

We may write
$$
\begin{aligned}
U_0(x, t)  & = (-\partial_t)^{m+2} \int\limits_{\bS^{n-1}} 
Y_+(x\omega -t) \dd\omega\\
& = c_{n-1} (-\partial_t)^{m+2}  \int\limits_{-1}^1 Y_+(|x|s-t) (1-s^2)^m \dd s\\
& = c_{n-1} (-\partial_t)^{m+1}  \int\limits_{-1}^1 \delta(|x|s-t) (1-s^2)^m \dd s.
\end{aligned}
$$ 
where
$
c _{n-1} = 2 {\pi}^{m+1}/m!$ is the area of the $(n-2)$-dimensional
unit sphere.  
In $\{t>0 \}$ we have
$$ 
\begin{gathered}
 \scalar{U_0(\cdot, t)}{\varphi} = c_{n-1}(-\partial_t)^{m+1}\int\limits_0^\infty \int\limits_{-1}^{1} 
\delta(sr-t) r^{n-1} (1-s^2)^m \tilde\varphi(r)\dd s\dd r\\
= c_{n-1} (-\partial_t)^{m+1} \int\limits_0^1 (t/s)^{n-1} (1-s^2)^m s^{-1} \tilde\varphi (t/s) \dd s
\\ = c_{n-1} (-\partial_t)^{m+1} \int \limits_t^\infty r^{n-2} (1-t^2/r^2)^m 
\tilde\varphi (r) \dd r.
\end{gathered}
$$
Set $q(s) = c_{n-1}(-\dd/\dd s)^{m+1}(1-s^2)^m ={\pi}^{-1}(2{\pi})^n p(s)$. 
A simple computation then gives
$$
\begin{gathered}
\scalar{U_0(\cdot ,t)}{\varphi} = c_{n-1} m! 2^m t^{m+1} \widetilde
\varphi (t) + \int\limits _t ^\infty q(t/r)r^m \widetilde \varphi (r) \dd r
\\
={\pi}^{-1}(2{\pi})^n
\scalar{{\kappa}_0(\cdot t)}\varphi.
\end{gathered}
$$
This finished the proof of the lemma.
\end{proof}

\begin{corollary}\label{formA}
With the notation in the previous lemma, we have
$$ A(f, g)(x, t) = \partial_t^m \left ( \pi (2\pi)^{-(n+1)/2} S(f, g)(x, t) 
+\int\limits_t^\infty p(t/r) r^{-1} S(f, g)(x, r) \dd r\right)
$$ for every $f$, $g\in C_0^\infty(\RR^n)$.
\end{corollary}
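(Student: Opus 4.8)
The plan is to obtain the formula by substituting the representation of the kernel $k_0$ furnished by Lemma~\ref{lemma:k0} directly into the definition \eqref{def:A} of $A$. Fix $f$, $g\in C_0^\infty(\RRn)$ and $x\in\RRn$, and set $\varphi_x(y)=f(x-y)g(x+y)$. This is a test function in $C_0^\infty(\RRn)$ which does \emph{not} depend on $t$, and \eqref{def:A} says precisely that $A(f,g)(x,t)=\scalar{k_0(\cdot,t)}{\varphi_x}$ for every $t$. So the whole corollary is a matter of evaluating this pairing using Lemma~\ref{lemma:k0}.

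The only point that deserves a genuine (though short) justification is the interchange of the $t$-derivatives with the $y$-integration. Lemma~\ref{lemma:k0} asserts that $t\mapsto\kappa_0(\cdot,t)$ is a smooth $\Cal D'(\RRn)$-valued map and that $k_0(\cdot,t)=\p_t^m\kappa_0(\cdot,t)$; since $\varphi_x$ is independent of $t$, differentiation in the parameter commutes with the distributional pairing, so $A(f,g)(x,t)=\p_t^m\scalar{\kappa_0(\cdot,t)}{\varphi_x}$. (Concretely, the term $\int_t^\infty p(t/r)r^m\int_{\bS^{n-1}}\varphi_x(r\omega)\dd\omega\dd r$ is really an integral over a compact $r$-interval because $f$ and $g$ have compact support, which makes differentiating under the integral sign routine.)

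It then remains to insert $\varphi=\varphi_x$ into the explicit expression for $\scalar{\kappa_0(\cdot,t)}{\varphi}$ given in Lemma~\ref{lemma:k0} and to recognise the two spherical integrals: using the invariance of $\dd\omega$ under $\omega\mapsto-\omega$ to swap the arguments of $f$ and $g$ so as to match the convention \eqref{def:B}, one gets $t^{m+1}\int_{\bS^{n-1}}\varphi_x(t\omega)\dd\omega=S(f,g)(x,t)$ and $r^m\int_{\bS^{n-1}}\varphi_x(r\omega)\dd\omega=r^{-1}S(f,g)(x,r)$, so that
$$\scalar{\kappa_0(\cdot,t)}{\varphi_x}=\pi(2\pi)^{-(n+1)/2}S(f,g)(x,t)+\int_t^\infty p(t/r)\,r^{-1}S(f,g)(x,r)\dd r.$$
Applying $\p_t^m$ to both sides yields the asserted identity. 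As the argument is a direct substitution, there is no real obstacle beyond the parameter-differentiation step mentioned above, which is covered by the smoothness statement in Lemma~\ref{lemma:k0}.
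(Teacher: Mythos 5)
Your proof is correct and is precisely the derivation the paper intends: the corollary is stated without proof as an immediate consequence of Lemma~\ref{lemma:k0}, obtained by pairing $k_0(\cdot,t)=\partial_t^m\kappa_0(\cdot,t)$ with $\varphi_x(y)=f(x-y)g(x+y)$ in \eqref{def:A} and matching the spherical integrals with \eqref{def:B} via $\omega\mapsto-\omega$. Your explicit justification of commuting $\partial_t^m$ with the pairing is a sensible addition, not a deviation.
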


It has become clear that, in order to get continuity properties of $A$, 
we need to study the bilinear operator $S$. 
We start with an elementary lemma, where $\opn{meas}(\cdot)$ denotes 
the surface measure on $\bS^{n-1}$.
\begin{lemma}\label{lemma:1.1}
There is a constant $C$ such that
\begin{equation}\label{measure:1}
\opn{meas}\left( \{{\omega}\in \bS^{n-1};\, r/2<|x-t\omega|<2r, \,
|x+t\omega|<s  \}\right)
\leq C \Big (\frac{s}{r}\Big )^{n-1} 
\end{equation}
when $r$, $s>0$, $ x\in \RR ^n$, $t\in \RR$.
\end{lemma}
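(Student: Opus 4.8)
The plan is to reduce \eqref{measure:1} to a uniform bound on the surface measure of the intersection of a sphere with a ball, and then to use the two hypotheses on $\omega$ to force the relevant sphere to have radius comparable to $r$.

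The ingredient I would isolate first is the elementary geometric fact that there is a constant $C_n$, depending only on $n$, such that
$$
\Cal H^{n-1}\bigl(\{y\in\RR^n;\ |y-c|=\rho,\ |y-y_0|<\sigma\}\bigr)\le C_n\,\sigma^{n-1}
$$
for all $c,y_0\in\RR^n$ and all $\rho,\sigma>0$, where $\Cal H^{n-1}$ denotes surface measure on the sphere $\{|y-c|=\rho\}$. To prove this I would split into two cases. If $\sigma\ge\rho$, the left side is at most the total area $\omega_{n-1}\rho^{n-1}\le\omega_{n-1}\sigma^{n-1}$, with $\omega_{n-1}=\Cal H^{n-1}(\bS^{n-1})$. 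If $\sigma<\rho$, the set is a spherical cap; choosing any point $z_0$ of it, every point of the cap lies within $2\sigma$ of $z_0$, and since $z_0$ is on the sphere an elementary computation in polar coordinates based at $z_0$ shows that a cap of Euclidean radius $2\sigma$ about a point of a sphere of radius $\rho$ has half-opening angle at most $\pi\sigma/\rho$ (here $\sigma<\rho$ is used), whence its area is at most $\omega_{n-2}\rho^{n-1}\int_0^{\pi\sigma/\rho}\psi^{n-2}\dd\psi=\tfrac{\omega_{n-2}\pi^{n-1}}{n-1}\sigma^{n-1}$.

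Given this, the rest is short. I would first dispose of $t=0$: then the set in \eqref{measure:1} is either empty or all of $\bS^{n-1}$, and in the latter case $s>r/2$, so the bound is clear; hence assume $t\ne 0$. The map $\omega\mapsto y=x+t\omega$ carries $\bS^{n-1}$ bijectively onto the sphere $\{|y-x|=|t|\}$ and multiplies $(n-1)$-dimensional surface measure by $|t|^{n-1}$, and under it $|x+t\omega|<s$ becomes $|y|<s$, so by the fact above
$$
\opn{meas}\bigl(\{\omega\in\bS^{n-1};\ |x+t\omega|<s\}\bigr)=|t|^{-(n-1)}\,\Cal H^{n-1}\bigl(\{|y-x|=|t|,\ |y|<s\}\bigr)\le C_n\bigl(s/|t|\bigr)^{n-1}.
$$
Finally I would split on the size of $s$. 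If $s\ge r/4$, the left side of \eqref{measure:1} is at most $\omega_{n-1}\le\omega_{n-1}4^{n-1}(s/r)^{n-1}$. If $s<r/4$ and the set in \eqref{measure:1} is nonempty, pick $\omega$ in it; from $|x-t\omega|>r/2$, $|x+t\omega|<s<r/4$ and $2|t|=|(x-t\omega)-(x+t\omega)|\ge|x-t\omega|-|x+t\omega|>r/2-r/4$ we get $|t|>r/8$, and combining this with the previous display bounds the left side of \eqref{measure:1} by $C_n(8s/r)^{n-1}$. Taking $C=\max\{8^{n-1}C_n,\,4^{n-1}\omega_{n-1}\}$ finishes the proof.

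Everything here is elementary; the only points needing a little care are the uniformity in $\rho$ of the cap estimate in the geometric fact, and the observation that one must extract the lower bound $|t|\gtrsim r$ — which is exactly where both constraints on $\omega$ enter — in order to replace the natural estimate $(s/|t|)^{n-1}$ by the required $(s/r)^{n-1}$.
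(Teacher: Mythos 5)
Your proof is correct, and it takes a genuinely different route from the paper's. The paper reduces to $s<r/4$, observes that both constraints force $|x|$, $|t|$ and $r$ to be comparable, and then pushes the measure forward under $\omega\mapsto\scalar{x}{\omega}/|x|$ to the density $(1-\tau^2)^{(n-3)/2}\dd\tau$ on $[-1,1]$; solving explicitly for the interval where $|x+t\omega|<s$ and integrating gives the sharper intermediate bound $C\bigl(s^2/(|x|\,|t|)\bigr)^{(n-1)/2}$, from which \eqref{measure:1} follows. You instead drop the constraint $|x-t\omega|<2r$ entirely, use only the lower bound $|x-t\omega|>r/2$ together with $|x+t\omega|<s$ (via $2|t|=|(x-t\omega)-(x+t\omega)|$) to extract $|t|\gtrsim r$, and then bound the measure of the larger set $\{\omega;\,|x+t\omega|<s\}$ by a clean, rescaled ``sphere intersect ball'' cap estimate $C_n(s/|t|)^{n-1}$. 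This is arguably more transparent and shows the lemma holds under weaker hypotheses; what it gives up is the intermediate estimate in terms of $|x|\,|t|$ (which the paper does not need anyway, since it immediately converts it to $(s/r)^{n-1}$). All the steps you flag as needing care --- the uniformity of the cap estimate in $\rho$, the degenerate cases $t=0$ and $s\ge r/4$, and the extraction of $|t|>r/8$ --- are handled correctly.
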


\begin{proof}
It is enough to prove the lemma for $s< r/4$.
Denote
$$
M(x,t;r,s ) = \{{\omega} \in \bS^{n-1};\, r/2<|x-t\omega| <2r,\
|x+t\omega|< s \}.
$$
Since
$$ 
\opn{meas}(M(x,-t;r,s ))=\opn{meas}(M(x,t;r,s )) 
$$
we may assume $t\ge 0$.

If $\omega \in M(x,t;r, s)$ we must have $\scalar{x}{\omega}\le 0$.
It follows that $r/2 < |x|+t< 2\sqrt 2 r$ when 
$   M(x,t;r, s)\ne  \emptyset$. 
Also $||x|-t|< s< r/4$, hence $|x|$, $t$ and $r$ are of the same order
of magnitude. 
Using the fact that the push-forward of the  measure $\dd{\omega}$ on
$\bS^{n-1}$ under the mapping ${\omega}\mapsto {\tau} = \scalar x
{\omega}/|x| \in [-1,1]$   
is a multiple of the measure $(1-{\tau}^2)^m \dd{\tau}$  we easily
see that 
$$
\opn{meas}(M(x,t;r, s)) \leq C \int\limits_{N(x,t,s)}(1-\tau)^m\dd \tau,
$$
where
$$
N(x,t,s) =\{\tau \in (0,1);\, |x|^2+t^2 -2|x|t\tau < s^2 \}.
$$
Since 
$$ 
\int\limits_{N(x,t,s)}(1-\tau)^m\dd \tau \le 
\int\limits_0^\frac{s^2-(|x|-t)^2}{2|x|t} \tau^m \dd \tau \le C \left(\frac{s^2-(|x|-t)^2}{2|x|t}\right)^{m+1},
$$ 
we  have proved that
$$
\opn{meas}(M(x,t;r, s))  \leq  C(s^2/(|x|t))^{\frac{n-1}{2}}.
$$
Recalling that $|x|$ and $t$ are of the same order of magnitude as $r$,
we see that \eqref{measure:1} holds.
\end{proof}

\begin{lemma}\label{lemma:op2}
Assume $r$, $ s>0$, $\phi$, $\psi\in C_0^\infty(\RR^n)$, $\phi$ is supported in the set
where $r/2< |x|< 2r$, $\psi$ is supported in the set where $|x|< s$, and $a\in \RR$.
Then there is a constant $C=C(a)$, independent of $r$, $s$, $\phi$ and $\psi$,
such that
\begin{align}
\normrum{S(\phi , \psi )}{(a, 0)} & \le C (s/r)^{m+1} \max(\hake{r}^a, \hake{r+s}^a) \norm{\phi }\,
\norm{\psi }. \label{op:2}
\end{align}
\end{lemma}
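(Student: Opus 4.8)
The plan is to estimate $\normrum{S(\phi,\psi)}{(a,0)}^2 = \int_{\RR^{n+1}} \hake{(x,t)}^{2a} |S(\phi,\psi)(x,t)|^2 \dd x\dd t$ directly, exploiting the support restrictions on $\phi$ and $\psi$ together with the geometric measure bound from Lemma~\ref{lemma:1.1}. First I would record the elementary support fact: since $S(\phi,\psi)(x,t) = t^{m+1}\int_{\bS^{n-1}} \phi(x+t\omega)\psi(x-t\omega)\dd\omega$, the integrand vanishes unless there is $\omega$ with $r/2 < |x+t\omega| < 2r$ and $|x-t\omega| < s$; by the triangle inequality applied to $2x = (x+t\omega)+(x-t\omega)$ and $2t\omega = (x+t\omega)-(x-t\omega)$, this forces $|x| \lesssim r+s$ and $|t| \lesssim r+s$, hence $\hake{(x,t)}^a \lesssim \max(\hake r^a, \hake{r+s}^a)$ on the support of $S(\phi,\psi)$ (treating $a\ge 0$ and $a<0$ separately, the latter using $\hake{(x,t)}\gtrsim 1$). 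This reduces matters to proving the unweighted bound $\normrum{S(\phi,\psi)}{L^2(\RR^{n+1})} \le C(s/r)^{m+1}\norm{\phi}\norm{\psi}$.

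For the unweighted bound I would change variables. Writing $u = x+t\omega$, the natural move is: for fixed $t$,
$$\int_{\RR^n} |S(\phi,\psi)(x,t)|^2 \dd x = t^{2(m+1)}\int_{\RR^n}\left|\int_{\bS^{n-1}}\phi(x+t\omega)\psi(x-t\omega)\dd\omega\right|^2\dd x,$$
and by Cauchy–Schwarz in $\omega$ over the sphere (of finite measure), this is bounded by
$$C\,t^{2(m+1)}\int_{\RR^n}\int_{\bS^{n-1}} |\phi(x+t\omega)|^2 |\psi(x-t\omega)|^2 \dd\omega\,\dd x.$$
Now integrate in $t$ as well and substitute $y = x-t\omega$ (for fixed $\omega$ and $t$, or better, pass to $v = t\omega \in \RR^n$ via polar coordinates $\dd v = t^{n-1}\dd t\,\dd\omega$, so $t^{2(m+1)}\dd t\,\dd\omega = t^{2m+3-n}\dd v = t^{0}\dd v$ since $2m+3-n = 0$). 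This is the key simplification: the weight $t^{m+1}$ is exactly calibrated so that $\dd\omega\,\dd t$ with the weight becomes Lebesgue measure on $\RR^n$ in $v$. After this substitution the triple integral becomes $\int\int |\phi(x+v)|^2|\psi(x-v)|^2\dd v\,\dd x = \norm{\phi}^2\norm{\psi}^2$ — but this gives only the $L^2$-boundedness already noted before the lemma, not the gain $(s/r)^{m+1}$.

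To capture the gain $(s/r)^{m+1}$, the Cauchy–Schwarz step above is too lossy; instead I would keep one factor outside and use Lemma~\ref{lemma:1.1} to control the angular measure. Fix $x$ and $t$ and estimate
$$\left|\int_{\bS^{n-1}}\phi(x+t\omega)\psi(x-t\omega)\dd\omega\right| \le \sup|\psi|\cdot\! \int_{\{\omega:\,r/2<|x+t\omega|<2r,\,|x-t\omega|<s\}}\!\! |\phi(x+t\omega)|\dd\omega;$$
wait — to keep $\norm{\phi}$ rather than $\sup|\phi|$ I would rather bound $|\phi(x+t\omega)\psi(x-t\omega)| \le \tfrac12(|\phi(x+t\omega)|^2 + |\psi(x-t\omega)|^2)$ and, on each resulting term, use that the \emph{other} variable's support confines $\omega$ to a set of measure $\le C(s/r)^{n-1}$ by Lemma~\ref{lemma:1.1}; combining with Cauchy–Schwarz on that small set yields a factor $(s/r)^{(n-1)/2} = (s/r)^{m+1}$. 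Concretely, $|S(\phi,\psi)(x,t)|^2 \le t^{2(m+1)}\cdot C(s/r)^{n-1}\int_{\bS^{n-1}}\big(|\phi(x+t\omega)|^2+|\psi(x-t\omega)|^2\big)\dd\omega$ on the relevant region, and then integrating in $x$ and in $v=t\omega$ as above (the $t^{2(m+1)}$ absorbing into $\dd v$) gives $\normrum{S(\phi,\psi)}{L^2}^2 \le C(s/r)^{n-1}(\norm{\phi}^2\norm{\psi}^2)$, i.e.\ the claimed $(s/r)^{m+1}$. Combining with the weight bound from the first paragraph finishes the proof.

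The main obstacle I anticipate is organizing the measure estimate cleanly: Lemma~\ref{lemma:1.1} bounds the angular measure $\opn{meas}(M(x,t;r,s))$, but to use it one must symmetrize correctly (the roles of $x+t\omega$ and $x-t\omega$, and the substitution $\omega\mapsto-\omega$ combined with $t\mapsto-t$) so that \emph{each} of the two terms $|\phi|^2$, $|\psi|^2$ gets multiplied by the small angular measure coming from the \emph{constraint on the other factor}, and so that after the polar substitution $v = t\omega$ no Jacobian mismatch survives (this is where $2m+3 = n$ is used). The rest — the reduction to the unweighted estimate and the change of variables — is routine.
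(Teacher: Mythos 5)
Your overall architecture (bound the weight using the support of the integrand, then prove the unweighted estimate with the gain $(s/r)^{m+1}$ via Lemma~\ref{lemma:1.1} and the substitution $v=t\omega$) is the same as the paper's, and your first two paragraphs are essentially sound. But the key step in your third paragraph has a genuine gap. Once you replace $|\phi(x+t\omega)\psi(x-t\omega)|$ by $\tfrac12\bigl(|\phi(x+t\omega)|^2+|\psi(x-t\omega)|^2\bigr)$ you have destroyed the bilinear structure: the right-hand side is quadratic in $\phi$ and of degree zero in $\psi$ (and vice versa), so no subsequent integration can produce the product $\norm{\phi}\,\norm{\psi}$. Concretely, integrating your displayed pointwise bound in $x$ and $v=t\omega$ yields terms of the form $\iint |\phi(x+v)|^2\dd v\,\dd x$, i.e.\ $\norm{\phi}^2$ times the volume of the $x$-region, not $\norm{\phi}^2\norm{\psi}^2$; the asserted conclusion $\normrum{S(\phi,\psi)}{L^2}^2\le C(s/r)^{n-1}\norm{\phi}^2\norm{\psi}^2$ does not follow. (The displayed pointwise inequality itself is also not justified by the steps you describe: squaring $\int_M(|\phi|^2+|\psi|^2)\dd\omega$ and applying Cauchy--Schwarz produces fourth powers, not the sum of squares you wrote.)

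The fix is simpler than what you attempted, and it is the step you dismissed as ``too lossy,'' just applied on the right set. Since the integrand $\phi(x+t\omega)\psi(x-t\omega)$ is supported in $\omega\in M(x,t;r,s)$, Cauchy--Schwarz over $M$ (writing the integrand as $1\cdot\phi\psi$) gives $\bigl|\int \phi\psi\,\dd\omega\bigr|^2\le \opn{meas}(M)\int |\phi(x+t\omega)|^2|\psi(x-t\omega)|^2\dd\omega$, which keeps the product intact and supplies the factor $C(s/r)^{n-1}=C(s/r)^{2(m+1)}$ from Lemma~\ref{lemma:1.1}; this is exactly inequality \eqref{main:101} in the paper. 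Your polar substitution $v=t\omega$ (using $2(m+1)=n-1$) then gives precisely $\norm{\phi}^2\norm{\psi}^2$. One further small point: for $a<0$ the weight bound requires $\hake{(x,t)}\gtrsim \hake{r}$ on the support, which follows from $2(|x|^2+t^2)=|x+t\omega|^2+|x-t\omega|^2\ge r^2/4$, and not merely $\hake{(x,t)}\gtrsim 1$ as you wrote.
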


\begin{proof}
It follows  from Lemma~\ref{lemma:1.1} and Cauchy's inequality
applied to the integration with respect to  ${\omega}$ that there is a constant
$C$ such that
\begin{equation}\label{main:101}
| S(\phi ,\psi )(x, t)|^2 \leq 
C t^{n-1}  (s/r)^{2(m+1)} 
\int |\phi (x+ t {\omega})|^2|\psi (x- t {\omega})|^2 \dd {\omega}.
\end{equation}
Since $2(|x|^2 +t^2) =|x+t\omega|^2 + |x-t\omega|^2$  when $\omega\in S^{n-1}$, one has
\begin{equation}\label{main:102}
\begin{gathered}
(1+|x|^2+t^2)^a | S(\phi ,\psi )(x,t)|^2 \\
\le 
C t^{n-1} \left(\frac{s}{r}\right)^{2(m+1)} 
\int 
(1+ |x+t\omega|^2 + |x-t\omega|^2)^a 
|\phi (x+t {\omega})|^2|\psi (x- t{\omega})|^2 \dd {\omega}\\
\le C_1 t^{n-1} \left(\frac{s}{r}\right)^{2(m+1)} \max(\hake{r}^{2a}, \hake{r+s}^{2a})
\int |\phi (x+ t {\omega})|^2| \psi(x- t {\omega})|^2 \dd {\omega}.
\end{gathered}
\end{equation}
An integration  with respect to $x$ and $t$ in \eqref{main:102}
gives $\eqref{op:2}$.
\end{proof}

\begin{lemma}\label{mainS}
Let  $a'$, $a''$, $a\in \RR$ satisfy
\begin{equation}\label{cond:a}
a< m+1 +\min(a', a''), \quad a\le a'+a''.
\end{equation}
Then $S$ is continuous from $H_{(a', 0)} \times H_{(a'', 0)}$ to 
$ H_{(a, 0)}$.
\end{lemma}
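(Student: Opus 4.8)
The plan is to decompose $f$ and $g$ dyadically in the space variable and to reduce the whole estimate to the single--annulus bound of Lemma~\ref{lemma:op2}. It suffices to prove the a priori inequality for $f,g\in\Cal S(\RR^n)$, since $\Cal S(\RR^n)$ is dense in each weighted space $H_{(a',0)}(\RR^n)$. Fix a partition of unity $1=\sum_{j\ge 0}\chi_j$ on $\RR^n$ with $\chi_0\in C_0^\infty(\RR^n)$ supported in $\{|x|<2\}$ and, for $j\ge 1$, $\chi_j\in C_0^\infty(\RR^n)$ supported in $\{2^{j-1}<|x|<2^{j+1}\}$, and set $f_j=\chi_j f$, $g_k=\chi_k g$. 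Since $\hake x\sim 2^j$ on $\opn{supp}\chi_j$ (with the convention $2^0=1$) and the $\chi_j$ have bounded overlap, one has $\normrum{f}{(a',0)}^2\sim\sum_j 2^{2a'j}\norm{f_j}^2$ and likewise for $g$; put $F_j=2^{a'j}\norm{f_j}$ and $G_k=2^{a''k}\norm{g_k}$, so that $\sum_j F_j^2\sim\normrum{f}{(a',0)}^2$ and $\sum_k G_k^2\sim\normrum{g}{(a'',0)}^2$.

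Next I would bound each block $S(f_j,g_k)$. Using the symmetry $S(f,g)=S(g,f)$ (obtained from the substitution $\omega\mapsto-\omega$), it is enough to treat $j\ge k$. If $j\ge 1$, then $f_j$ is supported in an annulus $\{r/2<|x|<2r\}$ with $r=2^j$ while $g_k$ is supported in a ball $\{|x|<s\}$ with $s=2^{k+1}$, so Lemma~\ref{lemma:op2} applies with weight exponent $a$, and, since $\max(\hake r^a,\hake{r+s}^a)\lesssim 2^{aj}$, it gives
\[
\normrum{S(f_j,g_k)}{(a,0)}\le C\,2^{-(m+1)(j-k)}\,2^{aj}\,\norm{f_j}\,\norm{g_k}
\]
with $C=C(a,n)$; the remaining block $j=k=0$ is covered by the $L^2\times L^2\to L^2$ boundedness of $S$ (here $(1+|x|^2+t^2)^a\sim 1$ on $\opn{supp}S(f_0,g_0)$), which gives the same bound. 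Expanding $S(f,g)=\sum_{j,k}S(f_j,g_k)$ and applying the triangle inequality, it remains to sum the right--hand sides over $j\ge k\ge 0$ and the mirror terms over $k>j\ge 0$.

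For the range $j\ge k$, I would substitute $j=k+l$ with $l\ge 0$ and insert $\norm{f_{k+l}}=2^{-a'(k+l)}F_{k+l}$, $\norm{g_k}=2^{-a''k}G_k$, which bounds that part by a constant times
\[
\sum_{l\ge 0}2^{(a-a'-m-1)l}\ \sum_{k\ge 0}2^{(a-a'-a'')k}F_{k+l}G_k .
\]
The hypothesis $a\le a'+a''$ makes $2^{(a-a'-a'')k}\le 1$, so the inner sum is at most $(\sum_k F_k^2)^{1/2}(\sum_k G_k^2)^{1/2}$ by the Cauchy--Schwarz inequality, and the hypothesis $a<m+1+a'$ (which follows from $a<m+1+\min(a',a'')$) makes the geometric series in $l$ converge; hence this part is $\le C\,\normrum{f}{(a',0)}\,\normrum{g}{(a'',0)}$. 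The mirror part over $k>j$ is handled identically with $f,g$ and $a',a''$ interchanged, and converges thanks to $a<m+1+a''$. Adding the two contributions gives $\normrum{S(f,g)}{(a,0)}\le C\,\normrum{f}{(a',0)}\,\normrum{g}{(a'',0)}$, which is the assertion.

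I do not expect a real obstacle, as Lemma~\ref{lemma:op2} was set up for precisely this computation. The only points that need some care are the uniform treatment of the innermost scale $j=0$ (harmless because $\hake x\sim 1$ there, and the single block $j=k=0$ is handled by plain $L^2$--boundedness) and the bookkeeping of making, in each block, the higher--index piece play the ``annulus'' role and the lower--index piece the ``ball'' role, so that the decisive gain $(s/r)^{m+1}=2^{-(m+1)|j-k|}$ of Lemma~\ref{lemma:op2} is available; that gain, together with the weight factor $2^{a\max(j,k)}$ and the two arithmetic conditions $a\le a'+a''$ and $a<m+1+\min(a',a'')$, is exactly what forces the double series to converge.
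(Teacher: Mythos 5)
Your proposal is correct and follows essentially the same route as the paper: a dyadic decomposition in $x$, the block estimate from Lemma~\ref{lemma:op2} giving the gain $2^{-(m+1)|j-k|}$, and a Schur/Cauchy--Schwarz summation using exactly the two hypotheses $a\le a'+a''$ and $a<m+1+\min(a',a'')$. Your separate treatment of the $j=k=0$ block (where the annulus hypothesis of Lemma~\ref{lemma:op2} fails) is a small point the paper glosses over, but otherwise the arguments coincide.
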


\begin{proof}
Choose $\chi\in C_0^\infty (\RR^n) $ a smooth decreasing function of $|x|$
such that $\chi(x)=1$  when $|x|< 1$, 
$\chi(x) =0$ when $|x|>2$ and $0\le \chi\le 1$.
Set
$$
\chi_j(x) = \chi(2^{-j} x) - \chi(2^{1-j}x), \quad  j\ge 1, \quad \chi_0(x) =\chi(x)$$
when $x\in \RR^n$. 
Then 
$f =\sum\limits_{0}^\infty \chi_jf$ with convergence in  ${\Cal S}(\RR^n)$
when $f$ is in that space. 
In addition, when $\rho \in \RR$,  there is $C= C(\rho) \ge 0$ such that
\begin{equation}\label{mainS:2}
C^{-1}\sum\limits_0^\infty 2^{2\rho j} \norm{\chi_j f}^2 
\le  \norm{f}^2_{(\rho , 0)} \leq  C \sum\limits_0^\infty 2^{2\rho j}
\norm{\chi_j f}^2.  
\end{equation}

Consider $f$, $g \in {\Cal S}(\RR^n)$ and denote $s_j= 2^{a'j} \norm{\chi_jf}$, $\sigma_k 
=2^{a'' k} \norm{\chi_k g}$. 
These are $\ell^2(\NN)$-sequences with $\ell^2$ norm bounded from 
above by  a constant $C$ times $\normrum{f}{(a', 0)}$ and 
$\normrum{g}{(a'', 0)}$, respectively.
Set $\ep = m+1+\min (a', a'')-a$. 
Then  $\ep >0$ and we shall show that
there is a constant $C>0$, which depends on 
$a$, $a'$, $a''$ only, such that
\begin{equation}\label{mainS:3}
\normrum{S(\chi_j f, \chi_k g)}{(a, 0)} \le C 2^{-\ep|j-k|}
s_j \sigma_k.
\end{equation}
Hence
$$ 
\begin{gathered}
\normrum{S(f, g)}{(a, 0)} \le \sum\limits_{j, k\ge 0} 
\normrum{S(\chi_j f, \chi_k g)}{(a, 0)}\le 
 C_1 \sum\limits_{j, k\ge 0}  2^{-\ep|j-k|}s_j \sigma_k\\
\le
C_2 (\sum\limits_{j\ge 0} s_j^2)^{1/2}(\sum\limits_{j\ge 0} \sigma_k^2)^{1/2}
\le C \normrum{f}{(a', 0)} \normrum{g}{(a'', 0)}.
\end{gathered}
$$
This would prove the statement.

It remains to prove \eqref{mainS:3}. 
Since $S$ is symmetric, and since the
condition \eqref{cond:a} is symmetric in $(a',a'')$, 
it suffices to prove
\eqref{mainS:3} when $j \geq k$.  
The previous lemma shows that
\begin{equation}\label{mainS:4}
|S(\chi _jf, \chi _kg)| \leq C 2^{-{\rho}_{jk}}s_j{\sigma}_k,
\end{equation} 
where
$$
\begin{gathered}
{\rho}_{jk} = (j-k)(m+1)-aj+a'j+a''k\\
=(j-k)(m+1+a'-a) +(a'+a''-a)k \\
\geq (j-k)(m+1+\min (a', a'')-a)= (j-k)\ep.
\end{gathered}
$$
This proves \eqref{mainS:3}.
\end{proof}

\begin{lemma}\label{lemma:3.9}
Define
$$
T(f,g)(x,t)= \int\limits_t ^\infty p(t/r)r^{-1} S(f,g)(x,r) \dd r,
$$
when $f, g \in C_0^\infty (\RRn)$. 
Then
$$
\normrum{T(f,g)}{(a,0)}\leq 2 \max _{|s|\leq 1 }|p(s)|\cdot 
\normrum{S(f,g)}{(a,0)}
$$
when $a \geq 0$.
\end{lemma}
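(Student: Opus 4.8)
The plan is to reduce the weighted estimate to a one–dimensional inequality in $t$, with $x$ frozen, and then to close it with Minkowski's integral inequality and a homogeneity computation.

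First I would rewrite $T$ by the substitution $s=t/r$. For $t\ge 0$ the variable $r$ ranges over $[t,\infty)$ and $s$ over $(0,1]$, with $r^{-1}\dd r=-s^{-1}\dd s$ and $p(t/r)=p(s)$, so
$$
T(f,g)(x,t)=\int_0^1 p(s)\,s^{-1}\,S(f,g)(x,t/s)\dd s ,\qquad t\ge 0 .
$$
Because $f,g\in C_0^\infty(\RRn)$, the function $S(f,g)$ belongs to $C_0^\infty(\RR^{n+1})$; in particular $S(f,g)(x,\cdot)$ is supported in a fixed compact set, so for each $t>0$ the integrand above vanishes for $s$ near $0$ and there is no difficulty with the factor $s^{-1}$. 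Moreover $S(f,g)$ and $T(f,g)$ have the same parity in $t$: for $S$ this is immediate from \eqref{def:B} after the change of variable $\omega\mapsto-\omega$, and for $T$ it follows from the fact that $\int_{\RR}p(t/\rho)\rho^{-1}S(f,g)(x,\rho)\dd\rho$ vanishes, its integrand being odd in $\rho$. Hence it suffices to bound the weighted $L^2$ norm of $T(f,g)$ over $\{t>0\}$, the contribution of $\{t<0\}$ being the same.

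Next I would insert the weight. Fix $x\in\RRn$ and set $W(\tau)=(1+|x|^2+\tau^2)^{a/2}\,|S(f,g)(x,\tau)|$. For $0<s\le1$ and $t>0$ one has $t\le t/s$, hence $(1+|x|^2+t^2)^{a/2}\le(1+|x|^2+(t/s)^2)^{a/2}$ because $a\ge0$; this monotonicity is the only place the hypothesis $a\ge0$ is used. Therefore
$$
(1+|x|^2+t^2)^{a/2}\,|T(f,g)(x,t)|\le\int_0^1|p(s)|\,s^{-1}\,W(t/s)\dd s ,\qquad t>0 .
$$
Applying Minkowski's integral inequality in $t\in(0,\infty)$, together with the scaling identity $\norm{W(\cdot/s)}_{L^2(0,\infty)}=s^{1/2}\norm{W}_{L^2(0,\infty)}$, gives
$$
\Big(\int_0^\infty(1+|x|^2+t^2)^a|T(f,g)(x,t)|^2\dd t\Big)^{1/2}\le\Big(\int_0^1|p(s)|\,s^{-1/2}\dd s\Big)\norm{W}_{L^2(0,\infty)} .
$$
Since $|p(s)|\le\max_{|\sigma|\le1}|p(\sigma)|$ on $[0,1]$ and $\int_0^1 s^{-1/2}\dd s=2$, the constant on the right is at most $2\max_{|s|\le1}|p(s)|$. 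Squaring, integrating over $x\in\RRn$, and using the parity reduction to pass from $\{t>0\}$ back to $\RR^{n+1}$ yields $\normrum{T(f,g)}{(a,0)}\le2\max_{|s|\le1}|p(s)|\cdot\normrum{S(f,g)}{(a,0)}$.

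The only slightly delicate point is the sign and parity bookkeeping that produces the representation $T(f,g)(x,t)=\int_0^1 p(s)\,s^{-1}S(f,g)(x,t/s)\dd s$ and reduces matters to $t>0$; once that is available the estimate is essentially one line. As an alternative to the substitution one may keep the integral in the form $\int_t^\infty$, bound $|p(t/r)|\le\max_{|s|\le1}|p(s)|$ (valid since $0\le t\le r$ there), move the weight inside using $a\ge0$ as above, and invoke the dual Hardy inequality $\norm{\int_t^\infty r^{-1}w(r)\dd r}_{L^2(0,\infty)}\le2\norm{w}_{L^2(0,\infty)}$; this produces the same constant.
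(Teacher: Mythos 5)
Your proposal is correct and follows essentially the same route as the paper: the oddness of $r\mapsto p(t/r)r^{-1}S(f,g)(x,r)$ reduces matters to $r\ge|t|$ where $|p(t/r)|\le\max_{|s|\le1}|p(s)|$, the hypothesis $a\ge0$ lets the weight pass inside the integral, and the remaining one-dimensional bound is the dual Hardy inequality with constant $2$. Your Minkowski-plus-scaling computation of that last step is just another standard proof of the same inequality the paper establishes via the exponential substitution and Young's inequality, so the two arguments coincide in substance.
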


\begin{proof}
We recall that
$$
p(t/r)r^{-1} S(f,g)(x,r)= p(t/r)r^m r^{-(m+1)}S(f,g)(x,r) 
$$
is an odd function of $r$. 
Hence
$$
\begin{gathered}
|T(f,g)(x,t)| \leq \Big\vert \int\limits _{|t|} ^\infty p(t/r)r^{-1}S(f,g)(x,r)\dd r
\Big\vert 
\leq C \int\limits _{|t|}^\infty r^{-1} |S(f,g)(x,r)| \dd r
\end{gathered}
$$
where $C= \max _{|s|\leq 1}|p(s)|$. 
When $a \geq 0$ we get
$$
\begin{gathered}
(1+|x|^2+t^2)^{a/2}|T(f,g)(x,t)| 
\leq C \int\limits _{|t|}^\infty r^{-1} (1+|x|^2+r^2)^{a/2} |S(f,g)(x,r)| \dd r
\end{gathered}
$$
The lemma follows therefore if we  notice that
$$
\int\limits_0 ^\infty H^2(t) \dd t \leq 4\int \limits_0^\infty h^2(t)\dd t
$$
when $H(t) = \int\limits _t ^\infty t^{-1} h(t) \dd t$ and $0 \leq h \in
C_0(\RR)$. 
In fact, if $\widetilde h(s)=e^{s/2}h(e^s)$ and 
$\widetilde H(s)=e^{s/2}H(e^s)$, then 
$$
\int\limits_0 ^\infty h^2(t) \dd t = \normrum {\widetilde h}{L^2(\RR )}^2, \ \ 
\int\limits _0 ^\infty H^2(t) \dd t = \normrum {\widetilde H}{L^2(\RR )}^2,
  $$
and $\widetilde H= {\gamma} \ast \widetilde h$, where ${\gamma}(s) =
(1-Y_+(s))e^{s/2}$  has  $L^1$ norm equal to $2$.
\end{proof}

\begin{proposition}\label{est:A}
\rm{(i)}  When $a'$, $a''$, $a\in \RR$ satisfy
$$ 0\le a < m+1 +\min(a', a''), \quad a\le a'+a'',$$ 
then $A$ extends to a continuous bilinear operator from $H_{(a', 0)}\times H_{(a'', 0)}$
to $H_{(a, -m)}$.

\rm{(ii)}  When $b'$, $b''$, $b\in \RR$ satisfy
$$ b < m+1 +\min(b', b''), \quad b\le b'+b'',$$ 
then $A$ extends to a continuous bilinear operator from 
$H_{(0, b')}\times H_{(0, b'')}$ to $H_{(0, b-m)}$.
\end{proposition}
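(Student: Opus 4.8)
\textbf{Proof proposal for Proposition~\ref{est:A}.}

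The plan is to derive both statements from Corollary~\ref{formA}, which expresses $A(f,g)$ as $\partial_t^m$ applied to the sum of $\pi(2\pi)^{-(n+1)/2}S(f,g)$ and the operator $T$ from Lemma~\ref{lemma:3.9}. For part~(i), the key observation is that applying $\partial_t^m$ in the $t$-variable costs exactly $m$ derivatives, which is accounted for by lowering the regularity index from $0$ to $-m$ in the target space: since $\hake{D_{x,t}}^{-m}\partial_t^m$ is bounded on $L^2(\RR^{n+1})$ and more generally $\hake{x,t}^a\hake{D_{x,t}}^{-m}\partial_t^m\hake{x,t}^{-a}$ is bounded on $L^2$ for $a\ge 0$ (by the usual commutator estimates, since the weight $\hake{x,t}^a$ and the constant-coefficient operator $\partial_t^m$ have a bounded commutator after the indicated conjugation), it suffices to show that $f,g\mapsto S(f,g)$ and $f,g\mapsto T(f,g)$ are continuous from $H_{(a',0)}\times H_{(a'',0)}$ to $H_{(a,0)}(\RR^{n+1})$ under the stated hypotheses $0\le a<m+1+\min(a',a'')$ and $a\le a'+a''$. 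The first of these is precisely Lemma~\ref{mainS}, and the second then follows from Lemma~\ref{lemma:3.9}, which bounds the $H_{(a,0)}$ norm of $T(f,g)$ by a constant times that of $S(f,g)$ when $a\ge 0$. Combining these gives part~(i) for $f,g\in C_0^\infty$, and the extension to all of $H_{(a',0)}\times H_{(a'',0)}$ follows by density.

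For part~(ii), I would pass to the Fourier transform side. Recall from the discussion after \eqref{hab:def} that the Fourier transform is a homeomorphism from $H_{(a,b)}$ onto $H_{(b,a)}$; thus $H_{(0,b')}(\RR^n)$ corresponds under $\hat{\ }$ to $H_{(b',0)}(\RR^n)$, and similarly for the other spaces, while $H_{(0,b-m)}(\RR^{n+1})$ on the $A$-side corresponds to $H_{(b-m,0)}(\RR^{n+1})$ on the $\widehat{A}$-side. Lemma~\ref{lemma:fourier} gives $\widehat A(f,g)(\xi,\tau)= \frac{(\tau/2)^m}{2^3 i(2\pi)^{n-1}} S(\hat f,\hat g)(\xi/2,\tau/2)$, so up to the harmless rescaling $(\xi,\tau)\mapsto(\xi/2,\tau/2)$ (which is an isomorphism of every $H_{(c,0)}(\RR^{n+1})$) and the bounded multiplier issue below, continuity of $A\colon H_{(0,b')}\times H_{(0,b'')}\to H_{(0,b-m)}$ is equivalent to continuity of the map $(\hat f,\hat g)\mapsto \tau^m S(\hat f,\hat g)$ from $H_{(b',0)}(\RR^n)\times H_{(b'',0)}(\RR^n)$ to $H_{(b-m,0)}(\RR^{n+1})$. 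Now $\tau^m$ is comparable to $\hake{(\xi,\tau)}^m$ only where $|\tau|\gtrsim\hake\xi$, so I would instead argue directly: multiplication by $\tau^m$ maps $H_{(b,0)}(\RR^{n+1})$ continuously into $H_{(b-m,0)}(\RR^{n+1})$ because $|\tau|^m\le\hake{(\xi,\tau)}^m$, hence $\hake{(\xi,\tau)}^{b-m}\tau^m$ is dominated by $\hake{(\xi,\tau)}^b$. Therefore it suffices to show $S$ is continuous from $H_{(b',0)}(\RR^n)\times H_{(b'',0)}(\RR^n)$ to $H_{(b,0)}(\RR^{n+1})$ under the hypotheses $b<m+1+\min(b',b'')$ and $b\le b'+b''$ — but with no nonnegativity restriction on $b$ — and this is again exactly Lemma~\ref{mainS}, whose statement requires only \eqref{cond:a} and places no sign constraint on the indices. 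Transferring back through the Fourier transform and the rescaling yields part~(ii), first for Schwartz $f,g$ and then for the full spaces by density.

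I expect the main obstacle to be the bookkeeping around the factor $\tau^m$ and the $\partial_t^m$ in getting the regularity indices to match cleanly: one must be careful that in part~(i) the operator $\partial_t^m$ is absorbed as a loss of exactly $m$ in the second index of the $(n+1)$-dimensional target (requiring the boundedness of $\hake{x,t}^a\hake{D_{x,t}}^{-m}\partial_t^m\hake{x,t}^{-a}$ on $L^2$, valid for $a\ge0$, which is where the hypothesis $a\ge0$ in (i) is used), and that in part~(ii) the Fourier-side identity converts first-variable weights to second-variable weights without introducing any sign obstruction — indeed the absence of a sign condition in (ii) is precisely what the Fourier transform buys us, since Lemma~\ref{mainS} holds for all real indices while Lemma~\ref{lemma:3.9} (needed in (i) but not in (ii)) needs $a\ge0$. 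A secondary point to check is that the rescaling $(\xi,\tau)\mapsto(\xi/2,\tau/2)$ and the accompanying Jacobian/constant factors in Lemma~\ref{lemma:fourier} are harmless on the $H_{(c,0)}(\RR^{n+1})$ level, which is immediate since dilations act boundedly (with bounded inverse) on these spaces.
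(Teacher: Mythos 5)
Your proposal is correct and follows essentially the same route as the paper, whose proof is a one-line citation of exactly the ingredients you use: Corollary~\ref{formA}, Lemma~\ref{mainS} and Lemma~\ref{lemma:3.9} for part~(i), and Lemma~\ref{lemma:fourier} together with Lemma~\ref{mainS} for part~(ii). The details you supply (absorbing $\partial_t^m$ into the $-m$ regularity index via conjugated boundedness, and handling $\tau^m$ and the dilation on the Fourier side) are the expected ones and are sound.
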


\begin{proof}
A combination of Corollary~\ref{formA}, Lemma~\ref{mainS} and Lemma~\ref{lemma:3.9} gives (i), and
(ii) follows from Lemma~\ref{lemma:fourier} and Lemma~\ref{mainS}.
\end{proof}

\begin{proposition}\label{main:A}
Let $(a', b', a'', b'', a, b )\in \RR^6$ satisfy
\begin{equation}\label{condA}
\begin{gathered}
0\le a< m+1 +\min(a', a''),\; a\le a'+a'',\; b< m+1 +\min(b', b''),\; b\le b'+b'',\\
a+b< m+1 +\min(a', a'') + \min(b',b'').
\end{gathered}
\end{equation}
Then $A$ is continuous from $H_{(a', b')}\times H_{(a'', b'')}$ to
$H_{(a, b-m)}$.
\end{proposition}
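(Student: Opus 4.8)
The idea is to interpolate between the two "one‑sided" estimates of Proposition~\ref{est:A} using the convexity of $I(A)$ established in Theorem~\ref{interp:thm}. Recall that $A$ is continuous from $\Cal S(\RR^n)\times\Cal S(\RR^n)$ to $\Cal S(\RR^{n+1})$, so Theorem~\ref{interp:thm} applies with $d=n$, $N=n+1$, and it suffices to exhibit the point $\sigma=(a',b',a'',b'',a,b-m)$ as a convex combination of points already known to lie in $I(A)$.

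The natural building blocks are the six "extreme" points one gets by turning on one feature at a time. Part~(i) of Proposition~\ref{est:A} gives, for any admissible pure‑weight triple, points of the form $(\alpha',0,\alpha'',0,\alpha,-m)\in I(A)$; part~(ii) gives pure‑regularity points $(0,\beta',0,\beta'',0,\beta-m)\in I(A)$. One also trivially has $(0,0,0,0,0,-m)\in I(A)$ from either part. The plan is: first, fix the target regularity exponents by writing $(0,b',0,b'',0,b-m)$ as the relevant point from part~(ii) (this uses $b< m+1+\min(b',b'')$ and $b\le b'+b''$, which are two of the hypotheses in \eqref{condA}); second, fix the target weight exponents by writing $(a',0,a'',0,a,-m)$ as the point from part~(i) (using $a< m+1+\min(a',a'')$, $a\le a'+a''$, and $a\ge0$); then average these two with suitable weights $\lambda,1-\lambda$. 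The subtlety is that a straight average of those two points produces the target primed exponents only if one rescales: to land on $(a',b',a'',b'')$ in the first four slots while landing on $(a,b-m)$ in the last two, one must take part~(i) at the rescaled triple $(a'/\lambda,a''/\lambda,a/\lambda)$ and part~(ii) at $(b'/(1-\lambda),b''/(1-\lambda),b/(1-\lambda))$, for some $\lambda\in(0,1)$, and check that both rescaled triples still satisfy their respective admissibility conditions.

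The main obstacle is precisely this bookkeeping: one must choose $\lambda$ so that \emph{simultaneously} $a/\lambda< m+1+\min(a'/\lambda,a''/\lambda)$ and $b/(1-\lambda)< m+1+\min(b'/(1-\lambda),b''/(1-\lambda))$, i.e. $a-\min(a',a'')<\lambda(m+1)$ and $b-\min(b',b'')<(1-\lambda)(m+1)$. Adding these, a feasible $\lambda$ exists iff $(a-\min(a',a''))+(b-\min(b',b''))<m+1$, which is exactly the last inequality in \eqref{condA}. The remaining conditions $a/\lambda\le a'/\lambda+a''/\lambda$, $b/(1-\lambda)\le b'/(1-\lambda)+b''/(1-\lambda)$ are scale‑invariant and hold by hypothesis; and $a/\lambda\ge0$ holds since $a\ge0$. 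One also needs a small degenerate‑case check: if $a=0$ one can take $\lambda\to0$ (replace the part~(i) point by the trivial point $(0,0,0,0,0,-m)$ and use only part~(ii)), and symmetrically for $b$; otherwise $\lambda$ ranges over a genuine open subinterval of $(0,1)$.

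Finally, with $\lambda$ so chosen, set
$$
\sigma_0=\Bigl(\tfrac{a'}{\lambda},0,\tfrac{a''}{\lambda},0,\tfrac{a}{\lambda},-m\Bigr)\in I(A),
\qquad
\sigma_1=\Bigl(0,\tfrac{b'}{1-\lambda},0,\tfrac{b''}{1-\lambda},0,\tfrac{b}{1-\lambda}-m\Bigr)\in I(A),
$$
the first membership by Proposition~\ref{est:A}(i) and the second by Proposition~\ref{est:A}(ii) together with the admissibility checks above. Then
$$
\lambda\sigma_0+(1-\lambda)\sigma_1=(a',b',a'',b'',a,b-m),
$$
so by the convexity of $I(A)$ (Theorem~\ref{interp:thm}) this point lies in $I(A)$, which is exactly the assertion that $A$ is continuous from $H_{(a',b')}\times H_{(a'',b'')}$ to $H_{(a,b-m)}$. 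This completes the proof.
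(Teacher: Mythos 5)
Your proposal is correct and follows essentially the same route as the paper: the paper also writes $(a',b',a'',b'',a,b-m)$ as $\theta\sigma_0+(1-\theta)\sigma_1$ with $\sigma_0=(a'/\theta,0,a''/\theta,0,a/\theta,-m)$ and $\sigma_1=(0,b'/(1-\theta),0,b''/(1-\theta),0,b/(1-\theta)-m)$, choosing $\theta\in(0,1)$ so that both rescaled triples satisfy the hypotheses of Proposition~\ref{est:A}, and then invokes Theorem~\ref{interp:thm}. Your feasibility analysis for $\lambda$ (the last inequality of \eqref{condA} being exactly what makes the two constraints compatible) matches what the paper leaves as ``easy to see''; the aside about the degenerate case $a=0$ is unnecessary, since the general interval argument already covers it.
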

 
\begin{proof}
If $(a', b', a'', b'', a, b )\in \RR^6$ satisfy \eqref{condA}, it is easy to
see that there is an $\theta\in (0, 1)$ such that
$$a< \theta(m+1) +\min(a', a''), \; b<(1-\theta)(m+1) +\min(b', b'').
$$
This shows that $a'/\theta$, $a''/\theta$, $a/\theta$, respectively
$b'/(1-\theta)$, $b''/(1-\theta)$, $b/(1-\theta)$
satisfy the conditions in Proposition~\ref{est:A}, hence 
$$
\begin{aligned}
(a'/\theta, 0,  a''/\theta, 0,  a/\theta, -m) & \in I(A),\\
(0, b'/(1-\theta), 0, b''/(1-\theta), 0, -m+ b/(1-\theta))& \in I(A).
\end{aligned}
$$
The proposition then follows by Theorem~\ref{interp:thm}.
\end{proof}

\begin{proof}[Proof of Theorem~\ref{mainthm}]
Theorem~\ref{mainthm} follows now directly from
Proposition~\ref{main:A} and Corollary~\ref{cor:K}.
\end{proof}


\begin{thebibliography}{84}



\bibitem{BL}
{\sc J.~Bergh, J.~L\"ofstr\"om},
\emph{Interpolation Theory. An Introduction.}
Die Grundlehren der mathematischen Wissenschaften, Vol. {\bf 223},  Springer, Berlin.



\bibitem{Hoer3}
{\sc L.~H\"ormander}, \emph{The analysis of linear partial differential operators I.} Springer Verlag,
Berlin, Heidelberg, New York, Tokyo, 1983-1985.


\bibitem{M:rims}
{\sc A.~Melin}, 
 Smoothness of higher order terms in backscattering.
In \emph{Wave phenomena and asymptotic analysis,} RIMS Kokyuroku 1315 (2003), 
43--51.


\bibitem{M:contemp}
{\sc A.~Melin},
Some transforms in potential scattering in odd dimension.  
Inverse problems and spectral theory,  
103--134, \emph{Contemp. Math.}, {\bf 348}, Amer. Math. Soc., Providence, RI, 2004.

 
  
\end{thebibliography}
\end{document}